\newcommand{\stkout}[1]{\ifmmode\text{\sout{\ensuremath{#1}}}\else\sout{#1}\fi}
\newcommand{\crsout}[1]{\ifmmode\text{\xout{\ensuremath{#1}}}\else\xout{#1}\fi}
\numberwithin{equation}{section}
\newtheorem{theorem}{Theorem}[section]
\newtheorem{corollary}[theorem]{Corollary}
\newtheorem{proposition}[theorem]{Proposition}
\theoremstyle{definition}
\newtheorem{remark}[theorem]{Remark}
\newcommand{\ind}{\mathbb{1}}
\newcommand{\C}{\mathbb{C}}
\newcommand{\R}{\mathbb{R}}
\newcommand{\N}{\mathbb{N}}
\newcommand{\sph}{\mathbb{S}}
\newcommand{\mH}{\mathcal{H}}
\newcommand{\mM}{\mathcal{M}}
\newcommand{\mA}{\mathcal{A}}
\newcommand{\mS}{\mathcal{S}}
\newcommand{\mF}{\mathcal{F}}
\newcommand{\mD}{\mathcal{D}}
\newcommand{\tm}{\tilde{m}}
\newcommand{\norm}[1]{\left\lVert #1 \right\rVert}
\newcommand{\abs}[1]{\left\lvert #1 \right\rvert}
\mathchardef\mathhyphen="2D
\renewcommand{\le}{\leqslant}
\renewcommand{\ge}{\geqslant}
\NewDocumentCommand{\formula}{ssom}{%
 \IfBooleanTF{#1}{%
  \IfBooleanTF{#2}{%
   \IfValueTF{#3}%
    {\begin{align}\label{#3}\begin{gathered}#4\end{gathered}\end{align}}%
    {\begin{gather}#4\end{gather}}%
  }{%
   \IfValueTF{#3}%
    {\begin{align}\label{#3}\begin{aligned}#4\end{aligned}\end{align}}%
    {\begin{gather*}#4\end{gather*}}%
  }%
 }{%
  \IfValueTF{#3}%
   {\begin{align}\label{#3}#4\end{align}}%
   {\begin{align*}#4\end{align*}}%
 }%
}
\title[Dimension-free control of truncated Riesz transforms by Riesz transforms]{On the dimension-free control of higher order truncated Riesz transforms by higher order Riesz transforms}
\author{Maciej Kucharski}
\address{Maciej Kucharski\\
		Institute of Mathematics\\
	University of Wrocław\\
	Plac Grun\-waldzki 2\\
	50-384 Wrocław\\
	Poland}
\email{maciej.kucharski@math.uni.wroc.pl}
\author{Mateusz Kwaśnicki}
\address{Mateusz Kwaśnicki \\ Department of Analysis and Stochastic Processes \\ Wrocław University of Science and Technology \\ ul. Wybrzeże Wyspiańskiego 27 \\ 50-370 Wrocław, Poland}
\email{mateusz.kwasnicki@pwr.edu.pl}
\author{B{\l}a{\.z}ej Wr{\'o}bel}
\address{B{\l}a{\.z}ej Wr{\'o}bel\\
		Institute of Mathematics \\
			Polish Academy of Sciences\\
			\'Sniadeckich 8\\
			00–656 Warszawa\\
	Poland \&
	Institute of Mathematics\\
	University of Wrocław\\
	Plac Grun\-waldzki 2\\
	50-384 Wrocław\\
	Poland}
\email{blazej.wrobel@math.uni.wroc.pl}
\subjclass[2020]{42B25, 42B20, 42B15}
\keywords{higher order Riesz transform, maximal function, dimension-free estimates}
\begin{document}

\begin{abstract}
Fix a positive integer $k$. Let $R_k$ be a higher order Riesz transform of order $k$ on $\R^d$ and let $R_k^t,$ $t>0,$ be the corresponding truncated Riesz transform. We study the relation between  $\|R_k f\|_{L^p(\R^d)}$ and $\|R_k^t f\|_{L^p(\R^d)}$ for $p=1$, $p=\infty,$ and $p=2.$ We do this by analyzing the 
factorization operator $M_k^t$ defined by the relation $R_k^t=M_k^t R_k.$ The operator $M_k^t$ is a convolution operator associated with an $L^1$ radial kernel $b_{k,d}^t(x)=t^{-d}b_{k,d}(x/t),$ where $b_{k,d}(x):=b_{k,d}^1(x).$

We prove that  $b_{k,d} \ge 0$ only for $k=1,2.$ We also show that for fixed $k\ge 3$, 
\[
\lim_{d\to \infty}\|b_{k,d}\|_{L^1(\R^d)}=\infty.
\]
 This  contrasts with the cases $k=1,2$, where it is known that $\|b_{k,d}\|_{L^1(\R^d)}=1$. Finally, we show that for any positive integer $k$, the Fourier transform of $b_{k,d}$ is bounded in absolute value by $1.$ This implies the contractive estimate
\[
\|R_k^t f\|_{L^2(\R^d)}\le \|R_k f\|_{L^2(\R^d)}
\]
and an analogous estimate for general singular integrals with smooth kernels for radial input functions $f.$

\end{abstract}
\maketitle

\section{Introduction}

Let $k$ be a positive integer and denote by $\mH_k = \mH_k^d$ the space of spherical harmonics of degree $k$ on the Euclidean sphere $\mathbb{S}^{d-1}.$ We identify $P \in \mH_k$ with the corresponding harmonic polynomial, which is homogeneous of degree $k.$ Consider the kernel
\begin{equation}
	\label{eq:KP}
	K_P(x) = \gamma_{k,d} \frac{P(x)}{|x|^{d+k}} \qquad\textrm{ with } \qquad  \gamma_{k,d} = \frac{\Gamma(\frac{k+d}{2})}{\pi^{d/2}\Gamma(\frac{k}{2})}.
\end{equation}
The higher order Riesz transform $R_P$  of order $k$ corresponding to $P$ is defined by  
\begin{equation} \label{eq:R}
	R_P f(x)= \lim_{t \to 0^+} R_P^t f(x), \qquad\textrm{ where } \qquad R_P^t f(x) = \gamma_{k,d} \int_{|y|>t} \frac{P(y)}{|y|^{d+k}} f(x-y) \, dy.
\end{equation}
The operators $R_P^t,$ $t>0$, are called truncated Riesz transforms. It is well known, see \cite[p. 73]{stein}, that the Fourier multiplier associated with the Riesz transform $R_P$ equals 
\begin{equation} \label{eq:m}
	\rho_P(\xi) = (-i)^k P\left(\tfrac{\xi}{|\xi|}\right),\qquad \xi\in \R^d.
\end{equation}

Most of the  time, the specific spherical harmonic  is not important  for our considerations. In such cases, we write $R_k$ to denote a higher order Riesz transform of order $k$ corresponding to some $P\in \mH_k.$  A similar convention applies to the truncated Riesz transform, which we denote by $R_k^t.$ For future reference, we also define the maximal truncated Riesz transform by
\[
R_k^*f(x)=\sup_{t>0}|R_k^t f(x)|.
\]

It is known that the truncated Riesz transform of order $k$ factors according to
\begin{equation}
\label{eq:fact}
R_k^t=M^t_k R_k.
\end{equation}
The factorization operator $M^t_k$ above is a convolution operator which is bounded on all $L^p(\R^d)$ spaces for $p\in [1,\infty].$ We denote by $b_{k,d}^t$ the convolution kernel of this operator which is known to be radial, real valued, and to belong to $L^1(\R^d)$. For general  $k$, the factorization is  implicit in \cite[Section 2]{mateu_verdera} ($k=1$), \cite[Section 2]{mov1} ($k$ even), and \cite[Section 4]{mopv} ($k$ odd). In the case $k=1$ the factorization \eqref{eq:fact} is given explicitly in \cite{kw} and \cite{LiuMeZhu}. For general positive integers $k$ this is justified in \cite[Proposition 2.1]{kwz}, whose proof also shows that  $b_{k,d}^t(x)=t^{-d}b_{k,d}^1(x/t).$ 

It is clear from \eqref{eq:fact} that the operator $M_k:=M_k^1$ and its kernel $b_{k}:=b_{k,d}^1$ provide important information about the relation between $R_k$ and $R_k^t$ or $R_k^*.$ For instance, when $k$ is even,  the work of Mateu, Orobitg, and Verdera \cite[Section 2]{mov1} implies that 
\[
b_k(x)=P_{k,d}(|x|^2)\ind{_B}(x),
\]
where $P_{k,d}$ is a polynomial of degree $k/2-1$ and $\ind{_B}$ denotes the indicator function of the Euclidean unit ball $B$ in $\R^d.$ This implies the estimate
\[
\abs{b_k(x)}\le C_{k,d}\ind{_B}(x),
\]
where $C_{k,d}$ is a constant,
and leads to
\begin{equation}
\label{eq:HL pointwise even}
|R_k^*f(x)|\le C_{k,d}\mM(R_k f)(x),
\end{equation}
where $\mM$ denotes the centered Hardy--Littlewood maximal operator over Euclidean balls. The estimate \eqref{eq:HL pointwise even}  can be viewed as an improved version of Cotlar's classical inequality, which for higher order Riesz transforms asserts that
\begin{equation*}
|R_k^*f(x)|\le B_{k,d}(\mM(R_k f)(x)+ \mM(f)(x)),
\end{equation*}
where $B_{k,d}$ is a constant. In particular, \eqref{eq:HL pointwise even} implies the following $L^p$ inequality:
\begin{equation}
\label{eq:LpRmaxR}
\|R_k^*f(x)\|_{L^p(\R^d)}\le C_{p,k,d} \|R_kf(x)\|_{L^p(\R^d)},
\end{equation}
valid for $p\in (1,\infty].$ When $k$ is odd,  a weaker version of \eqref{eq:HL pointwise even} holds, involving the composition of the Hardy--Littlewood maximal operator
\begin{equation}
\label{eq:HL pointwise odd}
|R_k^*f(x)|\le C_{k,d}\mathcal (\mM\circ \mM)(R_k f)(x).
\end{equation}
The above inequality was obtained by Mateu, Orobitg, Per\'ez, and Verdera in \cite[Section 4]{mopv}. This implies that \eqref{eq:LpRmaxR} remains valid for all positive integers $k.$ However, the order of growth of the constant $C_{p,k,d}$ coming from the proofs in \cite{mov1} and \cite{mopv} is exponential in the dimension $d.$

Recently, the first and  third  authors, in collaboration with Zienkiewicz \cite{kwz}, proved, among other results, that for fixed $k$ one may take a dimension-free constant (independent of $d$) in \eqref{eq:LpRmaxR}. They also established explicit dimension-free estimates in terms of $p.$ To achieve this, they employed various techniques from the theory of singular integrals, centered around  both real and complex methods of rotations.

Interestingly,  for $k=1$ and $k=2$, one can obtain a dimension-free variant of \eqref{eq:LpRmaxR} more directly. For $k=2$,  $b_2=\frac{1}{|B|}\ind_{B},$ where by $|B|$ we denote the Lebesgue measure of the ball. Therefore the maximal function corresponding to $M_2$ equals $\mM$ --- the Hardy--Littlewood maximal operator, see e.g.\ \cite[p.\ 427]{verdera conf}. Thus, one may take $1$ as the constant $C_{k,d}$ in \eqref{eq:HL pointwise even}, and a dimension-free variant of  \eqref{eq:LpRmaxR} follows from the classical work of Stein and Str\"omberg \cite{SteinMax}, \cite{StStr}. Somewhat surprisingly,  the kernel $b_1$ also turns out to be non-negative for $k = 1$. This was proved by Liu, Melentijević, and Zhu \cite{LiuMeZhu} and was an important ingredient there to obtain an improved variant of \eqref{eq:LpRmaxR} with the explicit constant $ (2+\frac{1}{\sqrt{2}})^{2/p}$ replacing $C_{p,k,d}$ for $p\ge 2.$ A consequence of \cite{LiuMeZhu} is also the pointwise bound
\begin{equation}
\label{eq: R1mS}
|R_1^*f(x)|\le \mS(R_1 f)(x),
\end{equation}
where $\mS$ is the spherical maximal operator
\[
\mS f(x)=\sup_{r \in (0, \infty)} \frac{1}{|\sph^{d - 1}|} \int_{\sph^{d - 1}} |f|(x + r u) \, du.
\]

A natural question that arises is whether similar properties of $b_k$ hold for $k\ge 3.$ Our first main result states that this is not the case.

\begin{theorem}
    \label{thm: bkL1}
    The kernel $b_k$ of the factorization operator $M_k$ is non-negative only for $k=1,2.$ Furthermore, for every fixed positive integer $k\ge 3$ we have
    \begin{equation}
\label{eq: bkL1 lim}
\lim_{d\to \infty}\|b_k\|_{L^1(\R^d)}=\infty.
    \end{equation}
\end{theorem}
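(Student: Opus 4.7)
\emph{Plan.} My first step is to derive a manageable explicit form for the radial profile of $b_{k,d}$. The factorization $R_k^1 = M_k R_k$ combined with the Bochner--Funk--Hecke formula — which pulls out the factor $(-i)^k P(\xi/|\xi|)$ from the Fourier transform of a function of the form $P(y/|y|)\, h(|y|)$ — yields $\widehat{b_{k,d}}(\xi) = \Phi_{k,d}(|\xi|)$, where $\Phi_{k,d}$ is an explicit Hankel-type integral of the profile $\gamma_{k,d}\, r^{-d}\ind_{r > 1}(r)$. Inverting the radial Fourier transform then gives $b_{k,d}(x) = B_{k,d}(|x|)$ in closed form. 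For even $k$ this recovers the representation $B_{k,d}(r) = P_{k,d}(r^2)\,\ind_{[0,1]}(r)$ of \cite{mov1} with $\deg P_{k,d} = k/2 - 1$; for odd $k$ a comparable explicit form (a polynomial piece plus an integrated Bessel tail) can be extracted from \cite[Proposition 2.1]{kwz}.

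For the non-negativity assertion I split into $k$ even and $k$ odd. In the even case $k \ge 4$, $P_{k,d}$ has degree at least one, and my plan is to read off its values at $r^2 = 0$ and $r^2 = 1$ from the Taylor expansion of $\Phi_{k,d}$ at the origin together with standard Bessel asymptotics; verifying that these values have opposite signs produces a zero, and hence a sign change, of $P_{k,d}$ inside $(0,1)$. For $k = 4$ this is a direct computation; the higher-even case should then follow by a recursive argument on $k$. For odd $k \ge 3$ the sign changes of $B_{k,d}$ are inherited from the first nontrivial zero of the Bessel function entering the representation, and again a direct evaluation at a carefully chosen radius should exhibit the sign change. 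The cases $k = 1, 2$ are cited from \cite{LiuMeZhu} and \cite{mov1}.

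For the $L^1$ divergence I would pass to polar coordinates,
\formula*{
 \|b_{k,d}\|_{L^1(\R^d)} = |\sph^{d-1}|\int_0^\infty |B_{k,d}(r)|\, r^{d-1}\, dr,
}
and study the $d \to \infty$ regime. Since the signed version satisfies $\int_0^\infty B_{k,d}(r)\, r^{d-1}\, dr = 1/|\sph^{d-1}|$, the task reduces to showing that the ratio of unsigned to signed radial integrals diverges, i.e.\ that there is massive cancellation between the positive and negative parts of $B_{k,d}$. Because the weight $r^{d-1}$ concentrates on a window of size $O(1/d)$ around $r = 1$, this is equivalent to exhibiting a root $r_d$ of $B_{k,d}$ with $1 - r_d = O(1/d)$ and lower bounds on the amplitude of $B_{k,d}$ on both sides of $r_d$. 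In the even case I would write $P_{k,d}(r^2) = \sum_{j=0}^{k/2-1} c_j(d)\,(1 - r^2)^j$, apply the elementary asymptotic $\int_0^1 r^{d-1}(1-r^2)^j\, dr \asymp d^{-j-1}$, and compare the signed and unsigned integrals term by term.

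The main obstacle, which I expect to be delicate, is obtaining sharp large-$d$ asymptotics of the coefficients $c_j(d)$ — ratios of Gamma functions arising from the Bochner--Funk--Hecke integral — precise enough to pin down simultaneously the location of the critical root at the correct scale $1/d$ and a quantitative failure of cancellation once absolute values are inserted. One must rule out the bad scenario in which all roots of $P_{k,d}$ stay at a fixed positive distance from $r = 1$, which would force $B_{k,d}$ to be of essentially constant sign on the concentration window and yield only the trivial bound $\|b_{k,d}\|_{L^1(\R^d)} \ge 1$.
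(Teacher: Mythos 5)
Your overall skeleton (Bochner's relation to get a closed-form radial profile, then polar coordinates and a $1/d$-scale analysis near $r=1$) matches the paper's, but both halves of the proposal have genuine gaps. For the sign-change claim, the endpoint-comparison strategy for even $k$ fails whenever $k\equiv 2 \pmod 4$: by the Chu--Vandermonde evaluation, ${_2F_1}(\tfrac{d+k}{2},1-\tfrac{k}{2};\tfrac{d}{2}+1;1)=\frac{(1-\frac{k}{2})^{\overline{k/2-1}}}{(\frac{d}{2}+1)^{\overline{k/2-1}}}$ has sign $(-1)^{k/2-1}$, so for $k=6,10,\dots$ the polynomial profile is \emph{positive} at both $r^2=0$ and $r^2=1$ (its two interior zeros are invisible to endpoint evaluation); the promised ``recursive argument on $k$'' is not given, and the odd-$k$ plan (``a direct evaluation at a carefully chosen radius'') is a hope rather than an argument. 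The paper handles all $k\ge 3$ uniformly by applying Klein's zero-counting formula to ${_2F_1}(\tfrac{d+k}{2},1-\tfrac{k}{2};\tfrac{d}{2}+1;x)$, which gives $E(k/2)\ge 1$ zeros in $(0,1)$, and then uses the non-singularity of the hypergeometric ODE on $(0,1)$ to see the zero is simple, hence a genuine sign change.

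For the $L^1$ divergence your plan stops exactly at the hard step. For even $k$ you would need sharp large-$d$ asymptotics of the coefficients showing that, on the window $1-r^2\asymp s/d$ (the paper's substitution $r^d=e^{-s}$), the rescaled profile converges to a fixed sign-changing function --- in the paper, $\frac{|\sph^{d-1}|}{d}B_k(e^{-s/d})$ is shown via Stirling, Taylor expansion and iterated finite differences to equal $c_k(\tfrac{d}{2})^{k/2-1}\bigl(L_{k/2-1}(s)+O(d^{-1})\bigr)$ with $L_{k/2-1}$ the Laguerre polynomial, after which Fatou's lemma yields $\|b_k\|_{L^1(\R^d)}\gtrsim (\tfrac{d}{2})^{k/2-1}$. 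You explicitly flag this computation (and the exclusion of the ``bad scenario'') as the obstacle, so as written the divergence is not established; note also that the normalization $\int b_k=1$ alone cannot force divergence, since massive cancellation must be \emph{exhibited}, not merely hoped for. Finally, for odd $k$ your route is misdirected: $B_k$ does not vanish for $r>1$, and the paper obtains divergence much more simply from the exterior part, where monotone convergence in $d$ gives $\int_1^\infty r^{-k-1}{_2F_1}(\tfrac{d+k}{2},\tfrac{k}{2};\tfrac{d}{2}+k;r^{-2})\,dr \to \int_1^\infty (1-r^{-2})^{-k/2}r^{-k-1}\,dr=\infty$ for $k\ge 3$; confining the analysis to cancellation inside the unit ball, as your plan does, forgoes this easy argument and would have to be rebuilt from scratch.
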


As a corollary of this theorem we prove that for $k\ge 3$ it is impossible to justify a variant of \eqref{eq:HL pointwise even}, \eqref{eq:HL pointwise odd}, or \eqref{eq: R1mS}, which will involve a dimension-free constant. This is the case even if we relax the maximal operators on the left-hand sides of \eqref{eq:HL pointwise even}, \eqref{eq:HL pointwise odd}, \eqref{eq: R1mS} to the single truncation $R_k^1.$ Below, for each dimension $d$, we let $\mA_d$ be a non-negative sublinear operator  that is a contraction on $L^{\infty}(\R^d)$ and is bounded on $L^2(\R^d).$ More precisely, we assume that for any $f,g\in L^2(\R^d)+L^{\infty}(\R^d)$ and $\lambda\in \C$, the following relations
\begin{equation*}
\mA_d(f)(x)\ge 0,\quad \mA_d(\lambda f)(x)=|\lambda| \mA_d f(x),\quad \mA_d(f+g)(x)\le \mA_d(f)(x)+\mA_d(g)(x)
\end{equation*}
hold for a.e.\ $x\in \R^d.$ We also impose that
\[
\|\mA_d(f)\|_{L^{\infty}(\R^d)}\le \|f\|_{L^{\infty}(\R^d)},\qquad f\in L^{\infty}(\R^d)
\]
and that there is a constant $C>0$ such that
\[
\|\mA_d(f)\|_{L^{2}(\R^d)}\le C\|f\|_{L^{2}(\R^d)},\qquad f\in L^{2}(\R^d).
\]
Notice that these assumptions imply that $\mA_d$ is continuous on $L^2(\R^d).$ Particular examples of such operators $\mA_d$ are $\mM,$ $\mM\circ \mM$, the spherical maximal operator $\mathcal S$ in dimensions $d\ge 3$, or any composition of such operators.
\begin{corollary}
\label{cor: bkL1}
For each $d$ let $\mA_d$ be a non-negative sublinear operator which is bounded on $L^2(\R^d)$ and is a contraction on $L^{\infty}(\R^d).$ Fix $k\ge 3$ and assume that there is a constant $C(k,d)$ for which
\[
|R_k^1 f(x)|\le C(k,d) \mA_d(R_k f)(x)
\]
holds for all Schwartz functions $f$ on $\R^d$ and all $x\in \R^d.$ Then $C(k,d)\to \infty$ as $d\to \infty.$
\end{corollary}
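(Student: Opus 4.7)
The plan is to reduce the corollary to Theorem~\ref{thm: bkL1} by showing that the assumed pointwise inequality forces $\|b_k\|_{L^1(\R^d)} \le C(k,d)$. Using the factorization \eqref{eq:fact}, the hypothesis rewrites as
\formula*{
|(M_k R_k f)(x)| \le C(k,d)\, \mA_d(R_k f)(x)
}
for Schwartz $f$ and almost every $x$. I will first extend this to all $g \in L^2(\R^d)$, and then evaluate both sides on a cut-off of $\sign(b_k)$.

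For the extension, I will use that $\{R_k f : f \in \mS(\R^d)\}$ is dense in $L^2(\R^d)$. Indeed, the Fourier multiplier $\rho_P$ from \eqref{eq:m} vanishes only on the cone through the zero set of $P$ on $\sph^{d-1}$, which has Lebesgue measure zero, so the adjoint of $R_k$ has trivial kernel on $L^2$ and the range of $R_k$ is dense. Given $g \in L^2$, approximating by $g_n = R_k f_n$, both $M_k g_n \to M_k g$ in $L^2$ (since $|\widehat{b_k}| \le 1$ makes $M_k$ a contraction on $L^2$) and $\mA_d g_n \to \mA_d g$ in $L^2$ (by the $L^2$-continuity of $\mA_d$ noted in the statement of the corollary). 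Passing to a common subsequence along which both converge pointwise a.e., the inequality transfers to every $g \in L^2(\R^d)$:
\formula*{
|(M_k g)(x)| \le C(k,d)\, \mA_d g(x) \quad \text{for a.e. } x \in \R^d.
}
Restricting to $g \in L^2 \cap L^\infty$ and invoking the $L^\infty$-contraction of $\mA_d$, this gives $\|M_k g\|_{L^\infty(\R^d)} \le C(k,d) \|g\|_{L^\infty(\R^d)}$.

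For the test step, I will set $g_R(y) = \sign(b_k(y))\, \ind_{\{|y| \le R\}}(y)$, which lies in $L^2 \cap L^\infty$ with $\|g_R\|_\infty = 1$. Since $b_k$ is radial (so $b_k(-y) = b_k(y)$) and $M_k g_R = b_k * g_R$ is continuous (as a convolution of an $L^1$ and an $L^\infty$ function), evaluating the preceding bound at $x = 0$ gives
\formula*{
\int_{|y| \le R} |b_k(y)|\, dy = (b_k * g_R)(0) \le \|M_k g_R\|_{L^\infty(\R^d)} \le C(k,d).
}
Letting $R \to \infty$ yields $\|b_k\|_{L^1(\R^d)} \le C(k,d)$, and Theorem~\ref{thm: bkL1} finishes the proof.

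The main obstacle is the $L^2$ extension step: transferring the a.e.\ pointwise inequality through a density argument requires continuity of both sides on $L^2$, and this is precisely why the assumptions on $\mA_d$ were calibrated to ensure $L^2$-continuity. Once that extension is in hand, the choice $g_R = \sign(b_k)\, \ind_B$ is the standard extremizer for $\|M_k\|_{L^\infty \to L^\infty} = \|b_k\|_{L^1}$, and the radiality of $b_k$ allows $(b_k * g_R)(0)$ to directly compute the truncated $L^1$ norm of $b_k$.
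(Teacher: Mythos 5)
Your proof is correct. The first half — rewriting the hypothesis via the factorization \eqref{eq:fact}, observing that $\{R_k f : f \in \mS(\R^d)\}$ is dense in $L^2(\R^d)$ (your justification via injectivity of the adjoint multiplier is exactly the "easy to see" step the paper leaves implicit), and transferring the pointwise inequality to all $g \in L^2$ along an a.e.\ convergent subsequence — coincides with the paper's argument. Where you genuinely diverge is the endgame. The paper passes from the pointwise bound to $\|M_k\|_{L^p \to L^p} \le C_{k,d} A(p)$ for $p \in [2,\infty)$, uses Marcinkiewicz interpolation (with the $L^\infty$-contraction and $L^2$-boundedness of $\mA_d$) to get $\limsup_{p\to\infty} A(p) \le 3$, and then exploits that $M_k$ is a convolution operator with a real radial kernel, so its $L^p$ and $L^{p'}$ norms agree, letting $p' \to 1$ to conclude $\|b_k\|_{L^1} = \|M_k\|_{L^1\to L^1} \le 3C_{k,d}$. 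You instead restrict to $g \in L^2 \cap L^\infty$, use the $L^\infty$-contraction of $\mA_d$ directly to get $\|M_k g\|_{L^\infty} \le C(k,d)\|g\|_{L^\infty}$, and test on $g_R = \sign(b_k)\ind_{\{|y|\le R\}}$, using continuity of $b_k * g_R$ (so the a.e.\ bound controls the value at the origin) and radiality of $b_k$ to read off $\int_{|y|\le R}|b_k| \le C(k,d)$, hence $\|b_k\|_{L^1} \le C(k,d)$. Both routes reduce to Theorem \ref{thm: bkL1}; yours avoids interpolation and the duality step for convolution operators entirely, is more elementary, and yields the slightly sharper bound $\|b_k\|_{L^1}\le C(k,d)$ in place of $3C(k,d)$ — at the cost of needing the small additional observations that $M_k g_R$ has a continuous representative and that $b_k$ is real and even. (Your appeal to Theorem \ref{thm: fbk} for $L^2$-contractivity of $M_k$ is harmless but unnecessary: mere $L^2$-boundedness, immediate from $b_k \in L^1$, suffices for the density step.)
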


We now turn to $L^2$ estimates. Our second main result concerns the Fourier transform
\[\widehat{b}_k(\xi)=\int_{\R^d}b_k(x)\exp(-2\pi i x\cdot \xi)\,dx.\]
This is a radial function, namely we have
\[ \widehat{b}_k(\xi) = m_k(\lvert \xi \rvert) \]
for some function $m_k$, called the \emph{radial profile} of $\widehat{b}_k$.

\begin{theorem}
    \label{thm: fbk}
    For each positive integer $k$ the Fourier transform $\widehat{b}_k$ satisfies $|\widehat{b}_k(\xi)|\le 1,$ $\xi \in \R^d.$ Consequently, the operator $M_k$ is a contraction on $L^2(\R^d)$ and for all $t>0$ we have
\begin{equation}
\label{eq: RktRL2}
\|R_k^t f\|_{L^2(\R^d)}\le \|R_k f\|_{L^2(\R^d)}.
\end{equation}
\end{theorem}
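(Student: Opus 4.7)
The plan is to reduce the claimed inequality to a pointwise bound on $\widehat{b_k}$, to compute $\widehat{b_k}$ explicitly via a Stein--Weiss type formula, and to bound the resulting Bessel integral. First, by Plancherel's theorem and the factorization \eqref{eq:fact}, together with the scaling relation $b_{k,d}^t(x)=t^{-d}b_k(x/t)$ which gives $\widehat{b_{k,d}^t}(\xi)=\widehat{b_k}(t\xi)$, one has
\[
\|R_k^t f\|_{L^2(\R^d)}^2 = \int_{\R^d} |\widehat{b_k}(t\xi)|^2\,|\rho_P(\xi)|^2\,|\hat f(\xi)|^2\,d\xi,
\]
and the analogous identity without the factor $|\widehat{b_k}(t\xi)|^2$ holds for $\|R_k f\|_{L^2(\R^d)}^2$. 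Since $\rho_P$ is nonzero almost everywhere (being determined by a nontrivial spherical harmonic) and $\widehat{b_k}$ is continuous, both \eqref{eq: RktRL2} and the statement that $M_k$ is an $L^2$ contraction are jointly equivalent to the single pointwise estimate $|\widehat{b_k}(\xi)|\le 1$ for every $\xi\in\R^d$.

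For the second step I would compute $\widehat{b_k}$ via the factorization, which yields $\widehat{b_k}(\xi)\,\rho_P(\xi)=m_k^1(\xi):=\widehat{K_P\ind_{\{|y|>1\}}}(\xi)$ for $\xi\neq 0$. Writing the truncated kernel as the product of the solid harmonic $P(y)$ and the radial function $\gamma_{k,d}\ind_{\{|y|>1\}}/|y|^{d+k}$, the classical Stein--Weiss identity for Fourier transforms of such products gives $m_k^1(\xi)=(-i)^k P(\xi)\,H(|\xi|)$, where $H$ is a Hankel-type integral against $J_{d/2+k-1}$. Dividing by $\rho_P(\xi)=(-i)^k P(\xi)/|\xi|^k$ and substituting $u=2\pi|\xi|s$ in the Hankel integral leads to the clean representation
\[
\widehat{b_k}(\xi) = (2\pi)^{d/2}\gamma_{k,d}\int_{2\pi|\xi|}^\infty u^{-d/2}\,J_{d/2+k-1}(u)\,du.
\]
The Weber--Schafheitlin evaluation $\int_0^\infty u^{-d/2}J_{d/2+k-1}(u)\,du = 2^{-d/2}\Gamma(k/2)/\Gamma((d+k)/2)$ together with the value of $\gamma_{k,d}$ from \eqref{eq:KP} confirms the normalization $\widehat{b_k}(0)=1$.

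The main obstacle is then the pointwise bound $|\widehat{b_k}(\xi)|\le 1$ for this Bessel integral. My first approach would be to insert the Poisson representation $J_\nu(u)=(u/2)^\nu/(\sqrt\pi\,\Gamma(\nu+1/2))\int_{-1}^1 e^{iut}(1-t^2)^{\nu-1/2}\,dt$ with $\nu=d/2+k-1$, which exhibits the integrand as a positive multiple of $u^{k-1}$ times a Fourier transform of the probability density proportional to $(1-t^2)^{d/2+k-3/2}$ on $[-1,1]$; after an Abel-type regularization of the inner oscillatory integral $\int_{2\pi|\xi|}^\infty u^{k-1}e^{iut}\,du$ and a Fubini swap, $\widehat{b_k}(\xi)$ should appear as an oscillatory average against a probability measure, yielding the bound $1$. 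The hard part is making this Fubini step precise in view of the $u^{k-1}$ growth. An alternative is to induct on $k$ using the identities $(u^{-\nu}J_\nu)'=-u^{-\nu}J_{\nu+1}$ and the three-term recursion $J_{\nu+1}=(2\nu/u)J_\nu-J_{\nu-1}$ to reduce the integral to a finite combination of normalized Bessel functions $\mathcal J_\nu(r)=2^\nu\Gamma(\nu+1)J_\nu(r)/r^\nu$, each of which satisfies $|\mathcal J_\nu(r)|\le 1$ (being the characteristic function of a probability measure on $[-1,1]$ for $\nu\ge -1/2$), and then to verify inductively that the accumulated coefficients produce exactly the bound $1$ rather than some looser $k,d$-dependent constant. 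Once $|\widehat{b_k}|\le 1$ is secured, both consequences stated in the theorem are immediate from Plancherel.
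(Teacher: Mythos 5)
Your setup is fine and coincides with the paper's: the Plancherel reduction to the pointwise bound $|\widehat{b_k}(\xi)|\le 1$ and the representation
\[
\widehat{b_k}(\xi)=\frac{2^{d/2}\Gamma(\frac{d+k}{2})}{\Gamma(\frac{k}{2})}\int_{2\pi|\xi|}^\infty u^{-d/2}J_{d/2+k-1}(u)\,du
\]
are exactly Proposition \ref{pro: mk} (obtained there via Bochner's relation), including the normalization $\widehat{b_k}(0)=1$. The actual content of Theorem \ref{thm: fbk}, however, is the bound $|\widehat{b_k}|\le 1$ for this oscillatory tail integral, and at that point you offer only two tentative strategies, neither of which can work as described. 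Both run into a structural obstruction coming from Theorem \ref{thm: bkL1} itself: for $k\ge 3$ the kernel $b_k$ changes sign and $\|b_k\|_{L^1(\R^d)}>1$ (indeed it tends to infinity with $d$). Consequently $\widehat{b_k}$ is not the Fourier transform of a probability measure, and it cannot be a convex combination of the normalized Bessel functions $\mathcal{J}_\mu(2\pi|\xi|)$ with $\mu\ge d/2-1$ (each of which is the transform of a probability measure, e.g.\ with density proportional to $(1-|x|^2)_+^{\mu-d/2}$), since by Fourier uniqueness that would force $b_k\ge 0$. In any finite expansion $\widehat{b_k}=\sum_j c_j\mathcal{J}_{\mu_j}$ one has $\sum_j c_j=\widehat{b_k}(0)=1$, so mixed signs give $\sum_j|c_j|>1$, and the triangle inequality --- which is all that ``each $\mathcal{J}_\mu$ is bounded by $1$'' buys --- cannot produce the constant exactly $1$; the cancellation between terms is precisely the difficulty. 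Moreover, for odd $k$ the integration-by-parts scheme does not terminate in a finite Bessel combination at all (it ends with another tail integral of the form $\int_r^\infty u^{-\beta}J_\beta(u)\,du$), and in your first strategy the regularized inner integral $\int_{2\pi|\xi|}^\infty u^{k-1}e^{iut}\,du$ is not unimodular, so the computation does not exhibit $\widehat{b_k}(\xi)$ as an average of unimodular numbers against a probability measure; you acknowledge the Fubini/growth issue but do not resolve it.

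The paper closes this gap with a different tool: the higher-monotonicity results of Lorch--Muldoon--Szego \cite{lms} applied to the representation above. Writing $\nu=d/2+k-1$ and $\alpha=(d+1)/2$, the profile $\tm$ is monotone between consecutive zeros $j_{\nu,n}$ of $J_\nu$, so its extrema occur at these zeros; the arch areas $a_n$ form a completely monotone sequence, so the values $\tm(j_{\nu,n})$ obey alternating-series tail estimates; and the sharp inequality $\int_0^\infty t^{1/2-\alpha}J_\nu(t)\,dt>a_0/2$ (Theorem 6.1 in \cite{lms}) supplies the one non-obvious endpoint bound $\tm(j_{\nu,1})\ge-\tm(0)=-1$. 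Any completion of your argument would need a quantitative substitute for this input; as it stands, the central estimate of the theorem is not proved.
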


When we restrict the input functions $f$ to radial ones, then \eqref{eq: RktRL2} from Theorem \ref{thm: fbk} may be extended to all singular integrals with smooth kernels. Namely, let  $\Omega\colon \mathbb{S}^d\to \C$ be a smooth function on the unit sphere with integral zero. Define the truncated singular integral $T^t_{\Omega},$ $t>0,$ and the singular integral associated with $\Omega$ by
\[T_{\Omega}^tf(x)= \int_{|y|>t} \frac{\Omega(y/|y|)}{|y|^{d}} f(x-y) \, dy,\qquad T_{\Omega}f(x)=\lim_{t\to 0^+}T_{\Omega}^t f(x).\]

\begin{corollary}
\label{cor: gsir}
Let $\Omega\colon \mathbb{S}^d\to \C$ be a smooth function on the unit sphere with integral zero. Then, for any radial function $f\in L^2(\R^d)$ and all $t>0$ we have
\begin{equation}
\label{eq: TktTL2}
\|T_{\Omega}^tf\|_{L^2(\R^d)}\le \|T_{\Omega} f\|_{L^2(\R^d)}.
\end{equation}
\end{corollary}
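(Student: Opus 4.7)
The plan is to decompose $\Omega$ into spherical harmonics, write $T_{\Omega}$ and $T_{\Omega}^t$ as series of higher order Riesz transforms, and exploit the radial symmetry of $f$ to reduce to a termwise application of Theorem~\ref{thm: fbk}.

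I would first expand $\Omega(y/|y|) = \sum_{k\ge 1} Y_k(y/|y|)$ on $\sph^{d-1}$, with $Y_k\in \mH_k$ identified with the corresponding solid harmonic polynomial of degree $k$; the mean-zero hypothesis eliminates the $k=0$ term, and smoothness of $\Omega$ makes $\|Y_k\|_{L^2(\sph^{d-1})}$ decay faster than any polynomial in $k$, which justifies the termwise manipulations below. Using $Y_k(y) = |y|^k Y_k(y/|y|)$ and the definition \eqref{eq:KP} of $K_{Y_k}$,
\[
\frac{\Omega(y/|y|)}{|y|^d} = \sum_{k\ge 1} \frac{Y_k(y)}{|y|^{d+k}} = \sum_{k\ge 1} \gamma_{k,d}^{-1} K_{Y_k}(y),
\]
so that, after integrating against $f(x-\cdot)$, one obtains $T_{\Omega} = \sum_{k\ge 1} \gamma_{k,d}^{-1} R_{Y_k}$ and, via the factorization \eqref{eq:fact}, $T_{\Omega}^t = \sum_{k\ge 1} \gamma_{k,d}^{-1} R_{Y_k}^t = \sum_{k\ge 1} \gamma_{k,d}^{-1} M_k^t R_{Y_k}$.

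Next, I would use the radial assumption on $f$ to show that for different values of $k$ the summands in each of these series are pairwise orthogonal in $L^2(\R^d)$. Indeed, by \eqref{eq:m} and the radiality of $\hat f$,
\[
\widehat{R_{Y_k}f}(\xi) = (-i)^k Y_k(\xi/|\xi|)\,\hat f(\xi)
\]
is the product of a radial function and $Y_k(\xi/|\xi|)$, so distinct $R_{Y_k}f$ are orthogonal by the orthogonality of spherical harmonics on $\sph^{d-1}$. The same remains true after applying $M_k^t$, since $M_k^t$ is convolution with the radial kernel $b_{k,d}^t$ and hence preserves the $Y_k$-angular structure. By orthogonality,
\[
\|T_{\Omega} f\|_{L^2(\R^d)}^2 = \sum_{k\ge 1} \gamma_{k,d}^{-2} \|R_{Y_k}f\|_{L^2(\R^d)}^2, \qquad \|T_{\Omega}^t f\|_{L^2(\R^d)}^2 = \sum_{k\ge 1} \gamma_{k,d}^{-2} \|M_k^t R_{Y_k}f\|_{L^2(\R^d)}^2,
\]
and Theorem~\ref{thm: fbk} finishes the proof by giving $\|M_k^t R_{Y_k}f\|_{L^2(\R^d)} \le \|R_{Y_k}f\|_{L^2(\R^d)}$ for each $k$, so that the second sum is dominated termwise by the first.

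The main obstacle I anticipate is organizational rather than conceptual: one must verify that the spherical harmonic expansion of $\Omega$ can legitimately be passed under the principal-value integral defining $T_{\Omega}$ and that the resulting series converge in $L^2(\R^d)$. These issues are routine given the super-polynomial decay of the coefficients guaranteed by $\Omega\in C^\infty(\sph^{d-1})$ and the uniform $L^2$-boundedness of each $R_{Y_k}$. Beyond these bookkeeping details, the argument uses nothing more than Theorem~\ref{thm: fbk} and the orthogonality of spherical harmonics.
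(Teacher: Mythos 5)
Your proposal is correct and follows essentially the same route as the paper's own proof: decompose $\Omega$ into spherical harmonics, factor each truncated Riesz transform through $M_k^t$ via \eqref{eq:fact}, use the radiality of $f$ (equivalently of $\hat f$ and of the multipliers of $M_k^t$) to obtain orthogonality of the different-degree terms, and conclude by applying Theorem \ref{thm: fbk} termwise. The only difference is presentational: you phrase the Plancherel step as abstract orthogonality of the summands, while the paper carries out the polar-coordinate computation with the radial profile explicitly.
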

\begin{remark}
    A strengthening of \eqref{eq: TktTL2} of the form
    \begin{equation}
    \label{eq: TktTL2 gen}
\|\sup_{t>0}|T_{\Omega}^tf|\|_{L^2(\R^d)}\le C_{\Omega,d}\|T_{\Omega} f\|_{L^2(\R^d)},
\end{equation}
valid for all functions $f\in L^2(\R^d),$ is false even if we allow a constant $ C_{\Omega,d}$ depending on the kernel $\Omega$ and the dimension $d.$ Such an inequality holds if and only if $\Omega$ satisfies an algebraic condition related to its expansion in spherical harmonics, see \cite[Theorem (iv)]{mov1} ($k$ even), and \cite[Theorem 1 (iv)]{mopv} ($k$ odd). This condition is satisfied when $\Omega(x)=P(x/|x|),$ $P\in\mH_k,$ is the kernel of a higher order Riesz transform. In such cases, it follows from \cite{kwz} that  \eqref{eq: TktTL2 gen} holds with a constant depending  on $k$ but independent of $P\in \mH_k$ and of the dimension $d.$  
\end{remark}

\subsection{Overview of our methods and the structure of the paper}

In Section \ref{sec: form} we give useful formulas for the radial profile $m_k$ of the multiplier $\widehat{b}_k,$ see Proposition \ref{pro: mk}, and for the radial profile $B_k$ of the kernel $b_k,$ see Proposition \ref{pro: B_k}. The proof of Proposition \ref{pro: mk} is based on Bochner's relation. Proposition \ref{pro: B_k} is derived from Proposition \ref{pro: mk} by an integration by parts argument similar to the one used in  \cite[Appendix 4.1]{LiuMeZhu}.

Section \ref{sec: kere} is devoted to kernel estimates. First, we justify Theorem \ref{thm: bkL1}. The proof is based on Proposition \ref{pro: B_k} and the considerations are split between $k$ odd and $k$ even. The odd case is easier because the kernel $b_k(x)$ does not vanish for $|x|\ge 1$ and, moreover, $\int_{|x|>1}|b_k(x)|\,dx$ goes to infinity with the dimension. The analysis in the even case is more elaborate, because then $b_k(x)$ vanishes for $|x|\ge 1$. However, Proposition \ref{pro: B_k} implies that $B_k(r)$ is a polynomial. Then, the change of variables $r^d=e^{-s}$, followed by a more careful analysis of $B_k(e^{-s/d})$, reduces  the problem to an estimate involving a Laguerre polynomial of degree $k/2-1$, see \eqref{eq: bkLag}. We finish Section \ref{sec: kere} with a proof of Corollary \ref{cor: bkL1}.

Section \ref{sec: L2e} is devoted to the  proof of the $L^2$ results: Theorem \ref{thm: fbk} and Corollary \ref{cor: gsir}. The proof of Theorem \ref{thm: fbk} is based on the formula \eqref{eq: mk 1} from Proposition \ref{pro: mk} for $m_k$ together with oscillatory estimates from \cite{lms}. Corollary \ref{cor: gsir} follows by using the decomposition of a general kernel $\Omega$ on the sphere into spherical harmonics.

\subsection{Notation}

\begin{enumerate}

\item  Positive integers $d$ and $k$ denote the dimension of the Euclidean space $\R^d$ and the order of the Riesz transform, respectively. 
\item For a  positive integer $\ell$, we let  $\mF_{\ell}$ be the $\ell$-dimensional Fourier transform
\[
\mF_{\ell}(f)(\xi)=\int_{\R^{\ell}}f(x)\exp(-2\pi i x\cdot \xi)\,dx.
\]
When $\ell=d$ we abbreviate $\mF_{\ell}(f)=\widehat{f}.$

\item We denote by $J_{\nu}$ the Bessel function of the first kind and order $\nu,$ i.e.\
\[
J_{\nu}(x) = \sum_{n=0}^{\infty} \frac{(-1)^n}{n! \, \Gamma(n + \nu + 1)} \left( \frac{x}{2} \right)^{2n + \nu}.
\]

\item The symbol ${_p}F_q$ represents the generalized hypergeometric function defined by
\[
{_p}F_q(a_1, \ldots, a_p; b_1, \ldots, b_q; z)
= \sum_{n=0}^{\infty} \frac{(a_1)^{\overline{n}} \cdots (a_p)^{\overline{n}}}{(b_1)^{\overline{n}} \cdots (b_q)^{\overline{n}}} \cdot \frac{z^n}{n!},
\]
where  $a^{\overline{n}} = a (a + 1) \dots (a + n - 1)$ is the  rising factorial (Pochhammer symbol). In this paper, we will use the Gaussian hypergeometric function ${_2}F_1$ and the functions ${_1}F_2$ and ${_0}F_1.$

\end{enumerate}

\subsection*{Acknowledgments}
\mbox{}The authors are grateful to the reviewer for careful reading of the paper and helpful comments, and to Adam Nowak for literature references.

M.\ Kwaśnicki was supported by the National Science Centre, Poland, grant no.\@ 2023/49/B/ST1/04303. B. Wr{\'o}bel was supported by the National Science Centre, Poland grant no.\@ 2022/46/E/ST1/00036.

This research was funded in whole or in part by National Science Centre, Poland, research projects
2023/49/B/ST1/04303 and 2022/46/E/ST1/00036. For the purpose of Open Access, the authors have applied a
CC-BY public copyright licence to any Author Accepted Manuscript (AAM) version arising from this submission.

\section{Formulas for the kernel \texorpdfstring{$b_k$}{bk} and its Fourier transform via special functions}
\label{sec: form}

Our first goal in this section is to derive two formulas for $\widehat{b}_k,$ one in terms of the Bessel function $J_\nu$ and another in terms of the generalized hypergeometric function $_1F_2.$ The second formula \eqref{eq: mk 2} is  not strictly needed in  this paper; however, we state it for potential future applications.
\begin{proposition}
\label{pro: mk}
The radial profile $m_k$ of the multiplier $\widehat{b}_k$ can be expressed by 
 \begin{equation}
\label{eq: mk 1}
m_k(r)=\frac{2^{d/2} \Gamma(\frac{d + k}{2})}{\Gamma(\frac{k}{2})} \int_{2\pi r}^\infty t^{-d/2} J_{d/2 + k - 1}(t) \, dt,\qquad r>0,
\end{equation}
and $m_k(0)=1.$ Furthermore
\begin{equation}
\label{eq: mk 2}
 m_k(r) = 1 - \frac{\Gamma(\frac{d + k}{2})}{\Gamma(\frac{d}{2} + k) \Gamma(\frac{k}{2} + 1)} \, (\pi r)^k {_1F_2}(\tfrac{k}{2}; \tfrac{d}{2} + k, \tfrac{k}{2} + 1; -(\pi r)^2),\qquad r>0.
 \end{equation}
\end{proposition}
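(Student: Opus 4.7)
The plan is to combine the factorization \eqref{eq:fact} with Bochner's relation. By the scaling $b_{k,d}^t(x) = t^{-d}b_k(x/t)$, taking Fourier transforms in $R_k^t = M_k^t R_k$ and using \eqref{eq:m} shows that the multiplier of $R_k^1$ equals $(-i)^k P(\xi/|\xi|)\, m_k(|\xi|)$. It therefore suffices to compute the Fourier transform of the kernel of $R_k^1$ directly; that kernel has the form $P(y)\phi(|y|)$ with $\phi(r) = \gamma_{k,d}\, r^{-(d+k)}\ind_{\{r>1\}}$.

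Bochner's relation (see \cite[Ch.~IV]{stein}) yields
\[
\mathcal{F}(P\,\phi(|\cdot|))(\xi) = (-i)^k P(\xi)\, \tilde\phi(|\xi|), \qquad \tilde\phi(s) = 2\pi\, s^{-(d/2+k-1)} \int_0^\infty \phi(r)\, J_{d/2+k-1}(2\pi sr)\, r^{d/2+k}\, dr.
\]
Matching the two expressions for the multiplier gives $m_k(s) = s^k\tilde\phi(s)$. Substituting $\phi$, simplifying $r^{-(d+k)+d/2+k}=r^{-d/2}$, and changing variables $u = 2\pi sr$ reduces $\tilde\phi(s)$ to $(2\pi)^{d/2}\gamma_{k,d}\, s^{-k}\int_{2\pi s}^\infty u^{-d/2}J_{d/2+k-1}(u)\,du$; collecting constants and using the value of $\gamma_{k,d}$ from \eqref{eq:KP} yields \eqref{eq: mk 1}. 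The normalisation $m_k(0)=1$ then follows from the Weber--Schafheitlin integral $\int_0^\infty u^{-d/2}J_{d/2+k-1}(u)\,du = 2^{-d/2}\Gamma(k/2)/\Gamma((d+k)/2)$, which converges thanks to $k>0$.

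For \eqref{eq: mk 2} I would rewrite
\[
m_k(r) = 1 - \frac{2^{d/2}\Gamma((d+k)/2)}{\Gamma(k/2)}\int_0^{2\pi r} u^{-d/2}J_{d/2+k-1}(u)\,du,
\]
insert the power series $J_{d/2+k-1}(u)=\sum_{n\ge 0}\frac{(-1)^n}{n!\,\Gamma(n+d/2+k)}(u/2)^{2n+d/2+k-1}$, integrate term by term (justified by absolute convergence on $[0,2\pi r]$), and convert the gamma ratios into Pochhammer symbols using $\Gamma(n+a)=\Gamma(a)(a)^{\overline n}$ for $a\in\{k/2+1,\,d/2+k\}$. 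The factor $1/(2n+k)$ is absorbed via the identity $1/(2n+k) = \Gamma(k/2)(k/2)^{\overline n}/\bigl(2\Gamma(k/2+1)(k/2+1)^{\overline n}\bigr)$, which exposes the numerator parameter $k/2$ and the denominator parameters $d/2+k$ and $k/2+1$, identifying the resulting series in $-(\pi r)^2$ as the ${_1F_2}$ in \eqref{eq: mk 2}. The main obstacle throughout is purely bookkeeping: Bochner's relation, the Fourier-transform convention with $2\pi$ in the exponent, and the precise constant $\gamma_{k,d}$ all have to be tracked carefully so that the prefactors match; no conceptual difficulty appears once these conventions are pinned down.
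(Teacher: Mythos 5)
Your proposal is correct and follows essentially the same route as the paper: Bochner's relation reduces the multiplier of $R_k^1$ to a $(d+2k)$-dimensional Hankel transform of the truncated radial profile, and the change of variables plus the value of $\gamma_{k,d}$ gives \eqref{eq: mk 1}, with $m_k(0)=1$ from the same Weber--Schafheitlin integral the paper cites. Your derivation of \eqref{eq: mk 2} by term-by-term integration of the Bessel series and Pochhammer bookkeeping is just a hands-on version of the paper's route through the ${_0F_1}$ representation and the DLMF integral identity, and the constants check out.
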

\begin{proof}
We first justify \eqref{eq: mk 1}. Fix $P\in \mH_k$ and denote by $\rho_P^1$ the multiplier symbol of the operator $R_P^1$ given by \eqref{eq:R}. Let $\varphi(r) = \gamma_{k,d}r^{-d-k}\ind_{r > 1}$ be the radial profile of $\gamma_{k,d}|x|^{-d-k}\ind{_{|x|>1}}$. Then we have
\[
\rho_P^1(\xi)=\mF_{d}(P(\cdot)\varphi(|\cdot|))(\xi).
\]
 Using Bochner's relation for $P\in \mH_k$ (see, e.g., \cite[Corollary p.72]{stein}), together with a standard approximation argument, we see that
\[
\rho_P^1(\xi)=(-i)^k P(\xi)\Phi(\abs{\xi}),
\]
where $\Phi$ is defined by
\begin{equation}
\label{eq: defPhi}
\mF_{d+2k}(\varphi(|\cdot|))(\eta)=\Phi(|\eta|),\qquad \eta\in \R^{d+2k}.
\end{equation}
Since 
\[
\rho_P^1(\xi)=(-i)^k \frac{P(\xi)}{|\xi|^k}(|\xi|^k\Phi(\abs{\xi}))
\]
we have
\[
\mF_d[R_P^1 f](\xi)=\mF_d[R_P f](\xi)|\xi|^k\Phi(\abs{\xi}),
\]
so that
$|\xi|^k\Phi(
\abs{\xi})=\widehat{b}_k(\xi)$ and thus $m_k(r)=r^k\Phi(r),$ $r>0.$

To prove \eqref{eq: mk 1} we come back to \eqref{eq: defPhi} and write the Fourier transform of the radial function $\varphi(|y|)$ on $\R^{d+2k}$ in terms of Bessel functions (the Hankel transform). Applying \cite[Section B.5]{grafakos}, we see that
\[
 m_k(r) = r^k\frac{2\pi \gamma_{k,d}}{r^{n/2-1}} \int_1^\infty t^{-d-k} J_{n/2 - 1}(2\pi tr) t^{n/2} \, dt, 
\]
where $n=d+2k.$ Changing variables and recalling the definition of $\gamma_{k,d}$, we reach \eqref{eq: mk 1}. Since $b_k\in L^1(\R^d)$, we know that  $m_k$ is a continuous function on $[0,\infty).$ Thus the equation $m_k(0)=1$ follows from \eqref{eq: mk 1} and \href{https://dlmf.nist.gov/10.22.E43}{equation~10.22.43} in~\cite{dlmf}. 

It remains to prove \eqref{eq: mk 2}.  Let $\tm(r)=m_k(r/(2\pi))$. From \eqref{eq: mk 1} we have 
\[
    \tm'(r) = -\frac{2^{d/2} \Gamma(\frac{d + k}{2})}{\Gamma(\frac{k}{2})}\, r^{-d/2} J_{d/2 + k - 1}(r) ,
\]
and thus, by \href{https://dlmf.nist.gov/10.16.E9}{equation~10.16.9} in~\cite{dlmf},
\[
 \tm'(r) = -\frac{\Gamma(\frac{d + k}{2})}{\Gamma(\frac{d}{2} + k) \Gamma(\frac{k}{2})}  \, (\tfrac{r}{2})^{k - 1} {_0F_1}(\tfrac{d}{2} + k; -(\tfrac{r}{2})^2),
\]
where $_0F_1$ denotes the generalized hypergeometric function. Furthermore, since 
\[
 \int_0^\infty \tm'(r) \, dr = -\tm(0) =  -1 , 
\]
we have
\begin{equation*}
 \tm(r) = 1 + \int_0^r \tm'(t) \, dt = 1- \frac{\Gamma(\frac{d + k}{2})}{\Gamma(\frac{d}{2} + k) \Gamma(\frac{k}{2})} \int_{0}^r (\tfrac{t}{2})^{k - 1} {_0F_1}(\tfrac{d}{2} + k; -(\tfrac{t}{2})^2) \, dt.
\end{equation*}
Using the change of variables $u= (t/r)^2$, we obtain
\begin{equation*}
\tm(r)=1-\frac{\Gamma(\frac{d + k}{2})}{\Gamma(\frac{d}{2} + k)\Gamma(\frac{k}{2})}(\tfrac{r}{2})^{k}\int_{0}^1 u^{k/2 - 1} {_0F_1}(\tfrac{d}{2} + k; -(\tfrac{r}{2})^2u) \, du.
\end{equation*}
Thus, \href{https://dlmf.nist.gov/16.5.E2}{equation~16.5.2} in~\cite{dlmf} applied with $a_0=k/2,$ $b_0=k/2+1$, $b_1=d/2+k$ and $z=-(\tfrac{r}{2})^2$ gives
\[
    \tm(r)=1-\frac{\Gamma(\frac{d + k}{2})}{\Gamma(\frac{d}{2} + k)\Gamma(\frac{k}{2}+1)}(\tfrac{r}{2})^k {_1F_2}(\tfrac{k}{2}; \tfrac{d}{2} + k, \tfrac{k}{2} + 1; -(\tfrac{r}{2})^2).
\]
Finally, coming back to $m_k(r)=\tm(2\pi r)$ we reach \eqref{eq: mk 2}. The proof of Proposition \ref{pro: mk} is thus completed.

\end{proof}

Using  Proposition \ref{pro: mk} we now give an expression for $b_k.$ 
\begin{proposition}
\label{pro: B_k}
    Let $B_k$ be the radial profile of $b_k.$ Then we have
\[
 B_k(r) = \begin{cases}
  \displaystyle
  \frac{(\Gamma(\frac{d + k}{2}))^2}{\pi^{d/2} \Gamma(\frac{d}{2} + 1) (\Gamma(\frac{k}{2}))^2} \, {_2F_1}(\tfrac{d + k}{2}, 1 - \tfrac{k}{2}; \tfrac{d}{2} + 1; r^2) & \text{if } \ r \in [0, 1), \\[1.2em]
  \displaystyle
  \frac{(\Gamma(\frac{d + k}{2}))^2}{\pi^{d/2 + 1} \Gamma(\frac{d}{2} + k)} \, \frac{1}{r^{d + k}} \, {_2F_1}(\tfrac{d + k}{2}, \tfrac{k}{2}; \tfrac{d}{2} + k; r^{-2}) \sin \tfrac{k \pi}{2} & \text{if } \ r \in (1, \infty).
 \end{cases}
\]
In particular, when $k$ is even we have $B_k(r) = 0$ for $r \in (1, \infty)$, and $B_k(r)$ is a polynomial of degree $k-2$ for $r \in [0, 1)$. However, no such simplification occurs when $k$ is odd.
\end{proposition}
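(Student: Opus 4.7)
The plan is to invert the Fourier transform $\widehat{b_k}$ using the formula for $m_k$ from Proposition \ref{pro: mk}, and then evaluate the resulting integral via the Weber--Schafheitlin discontinuous integral.

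First, since $b_k$ is radial on $\R^d$, the Hankel representation of the Fourier transform gives
\[
B_k(r) = 2\pi r^{-(d/2-1)}\int_0^\infty m_k(\rho)\,J_{d/2-1}(2\pi r\rho)\,\rho^{d/2}\,d\rho.
\]
The integrand involves the tail integral defining $m_k$, which is inconvenient; following the approach of \cite[Appendix 4.1]{LiuMeZhu}, I would integrate by parts using the Bessel identity $\frac{d}{du}[u^{d/2}J_{d/2}(u)] = u^{d/2}J_{d/2-1}(u)$, whose consequence is that $(2\pi r)^{-1}\rho^{d/2}J_{d/2}(2\pi r\rho)$ is an antiderivative of $\rho^{d/2}J_{d/2-1}(2\pi r\rho)$. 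The boundary term at $0$ vanishes by the Taylor expansion of $J_{d/2}$, and at infinity it is $O(\rho^{-1})$ since Bessel asymptotics give $m_k(\rho)=O(\rho^{-(d+1)/2})$ (from its oscillatory tail integral) and $J_{d/2}(2\pi r\rho)=O(\rho^{-1/2})$. Substituting the explicit formula for $-m_k'(\rho)$ derived in the proof of Proposition \ref{pro: mk} and renaming variables, this reduces matters to
\[
B_k(r) = \frac{\Gamma(\frac{d+k}{2})}{\pi^{d/2}\Gamma(\frac{k}{2})\,r^{d/2}}\int_0^\infty J_{d/2+k-1}(t)\,J_{d/2}(rt)\,dt.
\]

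Next I would evaluate the remaining integral by the Weber--Schafheitlin discontinuous integral. With $\mu=d/2+k-1$ and $\nu=d/2$, the combinations $(\mu+\nu+1)/2$, $(\mu-\nu+1)/2$, $(\nu-\mu+1)/2$ become $(d+k)/2$, $k/2$, $1-k/2$, respectively. For $r<1$ the formula gives a ${_2F_1}$ with argument $r^2$ together with a factor $r^{d/2}$ that cancels the $r^{-d/2}$ outside the integral, producing the first formula directly. For $r>1$, the symmetric formula gives a ${_2F_1}$ with argument $r^{-2}$ and an overall factor $r^{-d/2-k}$, with $(\Gamma(1-k/2))^{-1}$ in front; applying Euler's reflection $\Gamma(\tfrac k2)\Gamma(1-\tfrac k2)=\pi/\sin\tfrac{k\pi}{2}$ rewrites $(\Gamma(1-k/2))^{-1} = \sin(k\pi/2)\Gamma(k/2)/\pi$, and collecting constants yields the stated second formula.

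Finally, when $k$ is even, $1-k/2 = -(k/2-1)$ is a non-positive integer, so the ${_2F_1}$ series in the first branch terminates after $k/2$ terms, giving a polynomial of degree $k/2-1$ in $r^2$, i.e., of degree $k-2$ in $r$. In the second branch, $\sin(k\pi/2)=0$ when $k$ is even, so $B_k$ vanishes on $(1,\infty)$. When $k$ is odd none of the upper ${_2F_1}$ parameters is a non-positive integer and the series does not terminate. The main obstacle I anticipate is the bookkeeping of constants through the Hankel inversion and the Weber--Schafheitlin formula in both regimes, together with the careful use of Euler's reflection to turn the apparent pole at $1-k/2\in\{0,-1,-2,\dots\}$ (when $k$ is even) into the vanishing factor $\sin(k\pi/2)$. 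Justifying the integration by parts and the interpretation of the conditionally convergent integrals as improper Riemann integrals is routine but also requires some care.
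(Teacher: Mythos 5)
Your proposal is correct and follows essentially the same route as the paper: Hankel inversion of $m_k$, integration by parts via $(s^{d/2}J_{d/2}(s))'=s^{d/2}J_{d/2-1}(s)$ with the $O(\rho^{-(d+1)/2})$ decay of $m_k$ as in \cite[Appendix 4.1]{LiuMeZhu}, and then the Weber--Schafheitlin discontinuous integral (DLMF 10.22.56) in the two regimes $r<1$ and $r>1$, with Euler's reflection handling the $\sin\tfrac{k\pi}{2}$ factor and the vanishing for even $k$. Your reduced integral $\frac{\Gamma(\frac{d+k}{2})}{\pi^{d/2}\Gamma(\frac{k}{2})r^{d/2}}\int_0^\infty J_{d/2+k-1}(t)J_{d/2}(rt)\,dt$ matches the paper's expression after the substitution $s=rt$, so the constants work out as stated.
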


\begin{proof}
We proceed similarly to \cite[Appendix 4.1]{LiuMeZhu}.
Using the expression for the Fourier transform on $\R^d$ of the radial function $m_k(r)$ from \cite[Section B.5]{grafakos}, followed by the change of variables $2\pi tr=s$ and the formula $(s^{d/2}J_{d/2}(s))'= s^{d/2} J_{d/2-1}(s)$ (see \href{https://dlmf.nist.gov/10.6.E6}{equation~10.6.6} in~\cite{dlmf}), we obtain
\begin{equation*}
    B_k(r)=\frac{2\pi}{r^{d/2-1}}\int_{0}^{\infty}m_k(t)J_{d/2-1}(2\pi tr)t^{d/2}\,dt=\frac{1}{(2\pi)^{d/2}r^{d}}\int_{0}^{\infty}m_k(\tfrac{s}{2\pi r}) (s^{d/2}J_{d/2}(s))' \, ds.
\end{equation*}
A repetition of the argument used to prove \cite[eq.\ (4.2)]{LiuMeZhu} shows that 
\[m_k(\tfrac{s}{2\pi r})=O(s^{-(d+1)/2}),\qquad s\to \infty.\]
Thus, using integration by parts, $J_{d/2}(s)=O(s^{-1/2})$ and \eqref{eq: mk 1}, we obtain
\begin{align*}
    B_k(r)&=\frac{1}{(2\pi)^{d/2}r^{d}}\left[m_k(\tfrac{s}{2\pi r})s^{d/2}J_{d/2}(s)\right]_{s=0}^{s=\infty}-\frac{1}{(2\pi)^{d/2}r^{d}}\int_{0}^{\infty}\frac{d}{ds}\left(m_k(\tfrac{s}{2\pi r})\right) s^{d/2}J_{d/2}(s) \, ds\\
    &=\frac{\Gamma(\frac{d + k}{2})}{\pi^{d/2}\Gamma(\frac{k}{2})r^{d/2+1}}\int_{0}^{\infty} J_{d/2+k-1}(\tfrac{s}{r})J_{d/2}(s) \, ds
\end{align*}

We  now consider  the cases $r>1$ and $r<1$ separately. If $r<1$, we use \href{https://dlmf.nist.gov/10.22.E56}{equation~10.22.56} in~\cite{dlmf} (or equation~2.12.31.1 in~\cite{prudnikov}) with  $\lambda=0,$ $\mu=d/2,$ $\nu=d/2+k-1,$ and $a=1,$ $b=1/r;$ note that in this equation ${\bf F}(a,b;c;z)={_2}F_1(a,b;c;z)/\Gamma(c)$. This leads to
\begin{equation}
\label{eq: Bk1}
  B_k(r)=\frac{\Gamma(\frac{d + k}{2})^2}{\pi^{d/2}\Gamma(\frac{k}{2})^2}\frac{{_2}F_1(\tfrac{d + k}{2}, 1-\tfrac{k}{2}; \tfrac{d}{2} + 1; r^{2})}{\Gamma(\frac{d}{2}+1)}.
\end{equation}
If $r>1$ we use again \href{https://dlmf.nist.gov/10.22.E56}{equation~10.22.56} in~\cite{dlmf}, this time with $\lambda=0,$ $\mu=d/2+k-1,$ $\nu=d/2,$ and $a=1/r,$ $b=1.$ This gives
\begin{align*}
  B_k(r)&=\frac{\Gamma(\frac{d + k}{2})^2}{\pi^{d/2}\Gamma(\frac{k}{2})\Gamma(-\frac{k}{2}+1)r^{d+k}}\frac{{_2}F_1(\tfrac{d + k}{2}, \tfrac{k}{2}; \tfrac{d}{2} + k; r^{-2})}{\Gamma(\frac{d}{2}+k)},
\end{align*}
where it is understood that $\Gamma(-\frac{k}{2} + 1) = \infty$ and $B_k(r) = 0$ if $k$ is even. When $k$ is odd we use \href{https://dlmf.nist.gov/5.5.E3}{equation~5.5.3} in \cite{dlmf} and obtain
\begin{equation}
\label{eq: Bk2}
  B_k(r)=\frac{\Gamma(\frac{d + k}{2})^2}{\pi^{d/2+1}r^{d+k}}\frac{{_2}F_1(\tfrac{d + k}{2}, \tfrac{k}{2}; \tfrac{d}{2} + k; r^{-2})}{\Gamma(\frac{d}{2}+k)}\sin\tfrac{k\pi}{2}.
\end{equation}
Note that \eqref{eq: Bk2} holds also for even $k$ in which case both sides are zero. Finally, using \eqref{eq: Bk1}, \eqref{eq: Bk2} we complete the proof of Proposition \ref{pro: B_k}.

\end{proof}

\section{Kernel estimates --- proofs of Theorem \ref{thm: bkL1} and Corollary \ref{cor: bkL1}}

\label{sec: kere}

\subsection{Proof of Theorem \ref{thm: bkL1} --- sign change of the kernel.}
As we already mentioned, when $k=1$, the non-negativity of $b_k$ follows from \cite{LiuMeZhu}, while for $k=2$ it is contained e.g.\ in \cite[p.\ 427]{verdera conf}. 

We shall prove that for $k\ge 3$, the radial profile $B_k$ of $b_k$ changes sign inside the interval $(0,1)$. Denoting
\[
l=\tfrac{d + k}{2},\qquad m=1 - \tfrac{k}{2},\qquad n=l+m=d/2+1
\]
and using Proposition \ref{pro: B_k} we see that it is enough to justify that ${_2F_1}(l, m; n; x)$ changes sign in $(0,1).$ We will achieve this by showing that ${_2F_1}(l, m; n; x)$ has a simple zero in $(0,1)$.

 We apply the formula for the number of zeroes of ${_2}F_1(l,m,n;x)$ from \cite[p.\ 586, eq.\ (18)]{Klein1}. For $u\in \R$ we let $E(u)$ be the largest integer 
which is smaller than $u.$ Then, according to the aforementioned formula, the number of zeroes of ${_2}F_1(l,m,n;x)$ in the interval $(0,1)$ is equal to
\[
E\left(\frac{|l-m|-|1-n|-|n-l-m|+1}{2}\right)=E(k/2)
\]
and this is larger than or equal to $E(3/2)=1.$ Thus, there is a zero inside $(0,1),$ call it $x_0$. To show that $x_0$ is simple, we note that the hypergeometric function ${_2F_1}(a, b; c; x)$ satisfies the  \href{https://dlmf.nist.gov/15.10.E1}{hypergeometric equation~15.10.1} in~\cite{dlmf}, which is non-singular in $(0,1).$ By the uniqueness of solutions to such ODE's, the existence of a higher-order zero would imply that ${_2}F_1(l,m,n;x)$ is identically zero. This shows that $x_0$ is indeed a simple zero and ${_2}F_1(l,m,n;x)$ does change sign around it.

\subsection{Proof of \texorpdfstring{\eqref{eq: bkL1 lim}}{(\ref*{eq: bkL1 lim})} in Theorem \texorpdfstring{\ref{thm: bkL1}}{\ref*{thm: bkL1}} --- odd \texorpdfstring{$k$}{k}}
It turns out that in the case of odd $k$, the proof of \eqref{eq: bkL1 lim} is easier. When $k$ is odd and $r \in (1, \infty)$, we have
\[
|\sph^{d - 1}| r^{d - 1} |B_k(r)| = \frac{2 (\Gamma(\frac{d + k}{2}))^2}{\pi \Gamma(\frac{d}{2}) \Gamma(\frac{d}{2} + k)} \, \frac{1}{r^{k + 1}} \, {_2F_1}(\tfrac{d + k}{2}, \tfrac{k}{2}; \tfrac{d}{2} + k; r^{-2}) .
\]
By the definition of Gauss's hypergeometric function (\href{https://dlmf.nist.gov/15.2#E1}{equation~15.2.1} in~\cite{dlmf}), we have
\[
 {_2F_1}(\tfrac{d + k}{2}, \tfrac{k}{2}; \tfrac{d}{2} + k; r^{-2}) = \sum_{n = 0}^\infty \frac{(\frac{d + k}{2})^{\overline{n}} (\frac{k}{2})^{\overline{n}}}{n! (\frac{d}{2} + k)^{\overline{n}}} \, \frac{1}{r^{2 n}} \, ,
\]
where $a^{\overline{n}} = a (a + 1) \dots (a + n - 1)$ is the rising factorial. The coefficients of the above hypergeometric series are positive, increasing functions of $d$, and they converge to a finite limit
\[
 \lim_{d \to \infty} \frac{(\frac{d + k}{2})^{\overline{n}} (\frac{k}{2})^{\overline{n}}}{n! (\frac{d}{2} + k)^{\overline{n}}} = \frac{(\frac{k}{2})^{\overline{n}}}{n!} .
\]
Hence, by the monotone convergence theorem and \href{https://dlmf.nist.gov/4.6.E7}{equation~4.6.7} in~\cite{dlmf}, ${_2F_1}(\tfrac{d + k}{2}, \tfrac{k}{2}; \tfrac{d}{2} + k; r^{-2})$ increases with $d$ to a finite limit
\[
 \lim_{d \to \infty} {_2F_1}(\tfrac{d + k}{2}, \tfrac{k}{2}; \tfrac{d}{2} + k; r^{-2}) = \sum_{n = 0}^\infty \frac{(\frac{k}{2})^{\overline{n}}}{n!} \, \frac{1}{r^{2 n}} = (1 - r^{-2})^{-k / 2} .
\]
By another application of the monotone convergence theorem we have
\[
 \lim_{d \to \infty} \int_1^\infty \frac{1}{r^{k + 1}} \, {_2F_1}(\tfrac{d + k}{2}, \tfrac{k}{2}; \tfrac{d}{2} + k; r^{-2}) \, dr = \int_1^\infty \frac{(1 - r^{-2})^{-k / 2}}{r^{k + 1}} \, dr ,
\]
and the right-hand side is infinite if $k \ge 3$ due to a nonintegrability at $r \to 1^+$. Furthermore, by Stirling's approximation $\Gamma(a) \sim \sqrt{2 \pi} \, a^{a - 1/2} e^{-a}$ as $a \to \infty$,
\begin{align*}
 \lim_{d \to \infty} \frac{2 (\Gamma(\frac{d + k}{2}))^2}{\pi \Gamma(\frac{d}{2}) \Gamma(\frac{d}{2} + k)} & = \lim_{d \to \infty} \frac{2 (\frac{d + k}{2})^{d + k - 1}}{\pi (\frac{d}{2})^{(d - 1) / 2} (\frac{d}{2} + k)^{(d - 1) / 2 + k}} \\
 &  = \lim_{d \to \infty} \frac{2 (1 + \frac{k}{d})^{d + k - 1}}{\pi (1 + \frac{2 k}{d})^{(d - 1) / 2 + k}} = \frac{2 e^k}{\pi e^k} = \frac{2}{\pi}
\end{align*}
and altogether, integrating in polar coordinates we obtain
\[
\liminf_{d\to \infty}\|b_k\|_{L^1(\R^d)}\ge \liminf_{d\to \infty} \frac{2 (\Gamma(\frac{d + k}{2}))^2}{\pi \Gamma(\frac{d}{2}) \Gamma(\frac{d}{2} + k)} \, \int_{1}^{\infty}\frac{1}{r^{k + 1}} \, {_2F_1}(\tfrac{d + k}{2}, \tfrac{k}{2}; \tfrac{d}{2} + k; r^{-2}) \,dr=\infty.
\]


\subsection{Proof of \texorpdfstring{\eqref{eq: bkL1 lim}}{(\ref*{eq: bkL1 lim})} in Theorem \texorpdfstring{\ref{thm: bkL1}}{\ref*{thm: bkL1}} --- even \texorpdfstring{$k$}{k}}
For even $k$ we have $B_k(r) = 0$ when $r \in (1, \infty)$, and a more careful analysis of the behaviour of $B_k(r)$ for $r \in [0, 1)$ is necessary. Throughout the proof, the symbol $O$ contains an implicit constant that depends on $k.$

In the integral of $|\sph^{d - 1}| r^{d - 1} |B_k(r)|$ over $r \in [0, 1)$, the majority of mass accumulates near $r = 1$. It turns out that in order to have integrands converging to a non-trivial limit as $d \to \infty$, the substitution $r^d = e^{-s}$ is the right one. With this change of variables,
\[
\|b_k\|_{L^1(\R^d)}= |\sph^{d - 1}| \int_0^1 r^{d - 1} |B_k(r)| \, dr = \frac{|\sph^{d - 1}|}{d} \int_0^\infty |B_k(e^{-s / d})| e^{-s} \, ds .
\]
For even $k$ and $s \in [0, \infty)$, we have
\begin{equation}
\label{eq:kernel}
 \frac{|\sph^{d - 1}|}{d} \, B_k(e^{-s / d}) = \frac{(\Gamma(\frac{d + k}{2}))^2}{(\Gamma(\frac{d}{2} + 1) \Gamma(\frac{k}{2}))^2} \, {_2F_1}(\tfrac{d + k}{2}, 1 - \tfrac{k}{2}; \tfrac{d}{2} + 1; e^{-2 s / d}) .
\end{equation}
We claim that
\begin{equation}
\label{eq:stirling}
 \frac{(\Gamma(\frac{d + k}{2}))^2}{(\Gamma(\frac{d}{2} + 1) \Gamma(\frac{k}{2}))^2} = \frac{(\frac{d}{2})^{k - 2}}{(\Gamma(\frac{k}{2}))^2} \, (1 + O(d^{-1}))
\end{equation}
as $d \to \infty$. Indeed, by Stirling's approximation $\Gamma(a) =  \sqrt{2 \pi} \, a^{a - 1/2} e^{-a} (1 + O(a^{-1}))$ as $a \to \infty$,  we have
\begin{align*}
 \frac{(\Gamma(\frac{d + k}{2}))^2}{(\Gamma(\frac{d}{2} + 1))^2 (\frac{d}{2})^{k - 2}} & = \frac{(\Gamma(\frac{d + k}{2}))^2}{(\Gamma(\frac{d}{2}))^2 (\frac{d}{2})^k} \\
 & = \frac{(\frac{d + k}{2})^{d + k - 1}}{(\frac{d}{2})^k (\frac{d}{2})^{d - 1} e^k} \, (1 + O(d^{-1})) \\
 & = \frac{(1 + \frac{k}{d})^{d + k - 1}}{e^k} \, (1 + O(d^{-1})) \\
 & = e^{(d + k - 1) \log(1 + k/d) - k} \, (1 + O(d^{-1})) \\
 & = 1 + O(d^{-1}) ,
\end{align*}
where the last identity follows from Taylor's expansion of the logarithm.
This proves our claim \eqref{eq:stirling}.

The other factor in~\eqref{eq:kernel} is, however, more complicated. By the definition of the hypergeometric function (\href{https://dlmf.nist.gov/16.2#E1}{equation~16.2.1} in~\cite{dlmf}),
\[
 {_2F_1}(\tfrac{d + k}{2}, 1 - \tfrac{k}{2}; \tfrac{d}{2} + 1; e^{-2 s / d}) = \sum_{j = 0}^{k / 2 - 1} \frac{(\frac{d + k}{2})^{\overline{j}} (1 - \frac{k}{2})^{\overline{j}}}{j! (\tfrac{d}{2} + 1)^{\overline{j}}} \, e^{-2 j s / d} .
\]
Furthermore,
\[
 \frac{(\frac{d + k}{2})^{\overline{j}} (1 - \frac{k}{2})^{\overline{j}}}{j! (\tfrac{d}{2} + 1)^{\overline{j}}} = (-1)^j \binom{\frac{k}{2} - 1}{j} \frac{(\frac{d + k}{2})^{\overline{j}}}{(\tfrac{d}{2} + 1)^{\overline{j}}} = (-1)^j \binom{\frac{k}{2} - 1}{j} \frac{(\frac{d}{2} + 1 + j)^{\overline{k / 2 - 1}}}{(\tfrac{d}{2} + 1)^{\overline{k / 2 - 1}}} \, ,
\]
and hence
\begin{equation}
\label{eq:delta:pre}
 {_2F_1}(\tfrac{d + k}{2}, 1 - \tfrac{k}{2}; \tfrac{d}{2} + 1; e^{-2 s / d}) = \frac{1}{(\tfrac{d}{2} + 1)^{\overline{k / 2 - 1}}} \sum_{j = 0}^{k / 2 - 1} (-1)^j \binom{\frac{k}{2} - 1}{j} \lambda_j ,
\end{equation}
where
\[
 \lambda_j = (\tfrac{d}{2} + 1 + j)^{\overline{k / 2 - 1}} e^{-2 j s / d} .
\]

Before we continue, let us introduce some notation. We denote the forward difference of a sequence $a=(a_n)_{n\in \N}$ by $(\Delta a)_n = a_{n + 1} - a_n$. The $m$th iterated difference $\Delta^m a$ satisfies
\[
 (-1)^m (\Delta^m a)_n = \sum_{j = 0}^m (-1)^j \binom{m}{j} a_{n + j} .
\]
We can thus rewrite~\eqref{eq:delta:pre} as
\begin{equation}
\label{eq:delta}
 {_2F_1}(\tfrac{d + k}{2}, 1 - \tfrac{k}{2}; \tfrac{d}{2} + 1; e^{-2 s / d}) = \frac{(-1)^{k / 2 - 1}}{(\tfrac{d}{2} + 1)^{\overline{k / 2 - 1}}} \, (\Delta^{k / 2 - 1} \lambda)_0 .
\end{equation}
The iterated difference on the right-hand side cannot be evaluated explicitly. However, we may find its asymptotic behaviour as $d \to \infty$ using Taylor's expansion
\[
 e^{-2 j s / d} = \sum_{n = 0}^{k / 2 - 1} \frac{(-1)^n j^n s^n}{n! (\frac{d}{2})^n} + O(d^{-k / 2}),\qquad s\ge 0.
\]
The implicit constant in the big $O$ above and in all the following big $O$ symbols in the proof depends on both $k$ and $s.$ This, however, will not impact the proof as we will be only interested in taking the limit as $d\to \infty.$

It follows that
\[
 \lambda_j = \sum_{n = 0}^{k / 2 - 1} \frac{(-1)^n j^n s^n (\tfrac{d}{2} + 1 + j)^{\overline{k / 2 - 1}}}{n! (\frac{d}{2})^n} + O(d^{-1}) .
\]
By the binomial theorem for rising factorials (Exercise~5.37 in~\cite{concrete-mathematics}),
\[
 \lambda_j = \sum_{n = 0}^{k / 2 - 1} \sum_{m = 0}^{k / 2 - 1} \binom{\frac{k}{2} - 1}{m} \frac{(-1)^n j^n s^n (\tfrac{d}{2} + 1)^{\overline{m}} j^{\overline{k / 2 - 1 - m}}}{n! (\frac{d}{2})^n} + O(d^{-1}) .
\]
The terms with $m < n$ can be absorbed into $O(d^{-1})$, and so
\[
 \lambda_j = \sum_{n = 0}^{k / 2 - 1} \sum_{m = n}^{k / 2 - 1} \binom{\frac{k}{2} - 1}{m} \frac{(-1)^n j^n s^n (\tfrac{d}{2} + 1)^{\overline{m}} j^{\overline{k / 2 - 1 - m}}}{n! (\frac{d}{2})^n} + O(d^{-1}) .
\]
Each term under the sum is a polynomial in $j$ of degree $\tfrac{k}{2} - 1 + n - m$, which does not exceed $\frac{k}{2} - 1$. Recall that the iterated difference of order $\frac{k}{2} - 1$ applied to a polynomial of degree less than $\frac{k}{2} - 1$ is zero, so in the evaluation of $(\Delta^{k / 2 - 1} \lambda)_0$, all terms corresponding to $m > n$ disappear. Furthermore, the iterated difference of order $\tfrac{k}{2} - 1$ applied to the monomial $j^{k/2 - 1}$ is equal to $(\frac{k}{2} - 1)!$, and $j^n j^{\overline{k/2 - 1 - n}}$ is the sum of $j^{k/2 - 1}$ and a polynomial of degree less than $\frac{k}{2} - 1$. Altogether we find that
\[
 (\Delta^{k / 2 - 1} \lambda)_0 = \sum_{n = 0}^{k / 2 - 1} \binom{\frac{k}{2} - 1}{n} \frac{(-1)^n s^n (\tfrac{d}{2} + 1)^{\overline{n}} (\frac{k}{2} - 1)!}{n! (\frac{d}{2})^n} + O(d^{-1}) .
\]

Expanding the rising factorial $(\tfrac{d}{2} + 1)^{\overline{n}}$ and absorbing each term with an exponent of $d$ less than $n$ in $O(d^{-1})$, we obtain
\[
( \Delta^{k / 2 - 1} \lambda)_0 = \sum_{n = 0}^{k / 2 - 1} \binom{\frac{k}{2} - 1}{n} \frac{(-1)^n s^n (\frac{k}{2} - 1)!}{n!} + O(d^{-1}) .
\]
Substituting this expression into~\eqref{eq:delta} yields
\[
 {_2F_1}(\tfrac{d + k}{2}, 1 - \tfrac{k}{2}; \tfrac{d}{2} + 1; e^{-2 s / d}) = \frac{(-1)^{k / 2 - 1}}{(\tfrac{d}{2} + 1)^{\overline{k / 2 - 1}}} \biggl( \sum_{n = 0}^{k / 2 - 1} \binom{\frac{k}{2} - 1}{n} \frac{(-1)^n s^n (\frac{k}{2} - 1)!}{n!} + O(d^{-1}) \biggr) .
\]
Finally, $1 / (\tfrac{d}{2} + 1)^{\overline{k / 2 - 1}} = (\tfrac{d}{2})^{1 - k / 2} (1 + O(d^{-1}))$. Together with~\eqref{eq:stirling}, this allows us to rewrite~\eqref{eq:kernel} as
\begin{align*}
 &\frac{|\sph^{d - 1}|}{d} \, B_k(e^{-s / d}) = \frac{(-1)^{k / 2 - 1} (\frac{d}{2})^{k/2 - 1}}{(\Gamma(\frac{k}{2}))^2} \biggl( \sum_{n = 0}^{k / 2 - 1} \binom{\frac{k}{2} - 1}{n} \frac{(-1)^n s^n (\frac{k}{2} - 1)!}{n!} + O(d^{-1}) \biggr)\\
 & = \frac{(-1)^{k / 2 - 1}(\frac{k}{2}-1)! (\frac{d}{2})^{k/2 - 1}}{(\Gamma(\frac{k}{2}))^2} (L_{k/2-1}(s)+O(d^{-1})),
\end{align*}
where $L_{k/2-1}(s)$  is the Laguerre polynomial of degree $k/2-1$ (see \href{https://dlmf.nist.gov/18.5.E12}{equation~18.5.12} in~\cite{dlmf}).

Here, of course, the implicit constant in $O(d^{-1})$ depends on both $k$ and $s$. It follows that
\[
\|b_k\|_{L^1(\R^d)} = \frac{(\tfrac{d}{2})^{k/2 - 1}}{(\frac{k}{2} - 1)!} \int_0^\infty |L_{k/2 - 1}(s) + O(d^{-1})| e^{-s} \, ds ,
\]
and so, by Fatou's lemma,
\begin{equation}
\label{eq: bkLag}
\liminf_{d \to \infty} \frac{\|b_k\|_{L^1(\R^d)}}{(\frac{d}{2})^{k / 2 - 1}} \ge \frac{1}{(\frac{k}{2} - 1)!} \int_0^\infty |L_{k/2 - 1}(s)| e^{-s} \, ds > 0 .
\end{equation}
In particular, $\|b_k\|_{L^1(\R^d)}$ is unbounded as $d \to \infty$ for every even $k \ge 4$.

\subsection{Proof of  Corollary \ref{cor: bkL1}}
From \eqref{eq:fact} and the assumptions, we have
\[
|M_k(R_k f)(x)|\le C_{k,d} |\mA_d(R_k f)(x)|
\]
for all Schwartz functions $f$ on $\R^d$ and a.e.\ $x\in \R^d.$ Using \eqref{eq:m}, it is easy to see that $\mD= \{ R_k f \colon f\colon \R^d\to \C  \textrm{ is Schwartz}\}$ is a dense subset of $L^2(\R^d).$ Take a general function $g\in L^2(\R^d)$ and assume that $g_n\in \mD$ converges to $g$ in  $L^2(\R^d).$ Using the continuity of $M_k$ and $\mA_d$ we know that $M_k(g_n)\to M_k(g)$ and $\mA_d(g_n)\to \mA_d(g),$ the convergence being in $L^2(\R^d).$ Passing to a subsequence, we may assume that the convergence also holds almost everywhere. Furthermore, since $g_n\in \mD$ we have
\[
|M_k(g_n)(x)|\le C_{k,d} |\mA_d(g_n)(x)|,\qquad x\textrm{-a.e.}
\]
Hence, taking $n\to \infty$ we see that
\begin{equation}
\label{eq: MkMa}
|M_k(g)(x)|\le C_{k,d} |\mA_d(g)(x)|,\qquad x\textrm{-a.e.},
\end{equation}
for $g\in L^2(\R^d).$ Denote by $A(p)$ the norm of $\mA_d$ as an operator on $L^p(\R^d),$ $p\in [2,\infty).$ Since $L^2(\R^d)$ is dense in $L^p(\R^d)$ using \eqref{eq: MkMa} we obtain
\[
\|M_k f\|_{L^p(\R^d)}\le C_{k,d} A(p) \|f\|_{L^p(\R^d)},\qquad f\in L^p(\R^d).
\]
 Note that because $\mA_d$ is an $L^{\infty}(\R^d)$ contraction, an explicit version of the Marcinkiewicz interpolation theorem, see e.g. \cite[Theorem 1.3.2]{grafakos}, implies that $\limsup_{p\to\infty} A(p) \le 3.$

Now, since $M_k$ is a convolution operator, we also see that
\[
\|M_k f\|_{L^p(\R^d)}\le 3C_{k,d} \|f\|_{L^p(\R^d)},\qquad f\in L^p(\R^d),
\]
for $p\to 1^+,$ and, consequently,
\[
\|b_k\|_{L^1(\R^d)}=\|M_k \|_{L^1(\R^d)\to L^1(\R^d)}\le 3C_{k,d}.
\]
Finally, Theorem \ref{thm: bkL1} shows that $C_{k,d}\to \infty$ as $d\to \infty,$ completing the proof.

%
%

\section{\texorpdfstring{$L^2$}{L2} estimates --- proofs of Theorem \ref{thm: fbk} and Corollary \ref{cor: gsir}}
\label{sec: L2e}


\subsection{Proof of Theorem \ref{thm: fbk}}
The case $k=1$ follows from \cite[Corollary 1.3]{LiuMeZhu}, while the case $k=2$ is a consequence of the formula $b_2=\frac{1}{|B|}\ind_{B},$ cf.\ \cite[p.\ 427]{verdera conf}, which implies that $\|b_2\|_{L^1(\R^d)} \le1.$ Hence, in the proof, we focus on $k\ge 3.$

Note that to prove Theorem \ref{thm: fbk}, it is enough to show that the radial profile $m_k(|\xi|)=\widehat{b}_k(\xi)$ satisfies $|m_k(r)|\le 1.$ Then \eqref{eq: RktRL2} easily follows from the factorization \eqref{eq:fact} and Plancherel's theorem. Recalling the abbreviation $\tm(r)=m_k(r/(2\pi))$, our task boils down to verifying
\begin{equation}
\label{eq: mk<1}
|\tm(r)|\le 1,\qquad r>0.
\end{equation}

The estimate \eqref{eq: mk<1} will be deduced from Proposition \ref{pro: mk} together with an oscillatory estimate for integrals of Bessel functions from  \cite{lms}, which we now describe. Let $\nu > \tfrac{1}{2}$ and $0 \le \alpha < \nu + \tfrac{3}{2}$. Denote by $j_{\nu, n}$ ($n = 1, 2, \ldots$) the $n$th zero of $J_\nu$ on $(0, \infty)$, and let $j_{\nu, 0} = 0$. By Theorem~5.2 in~\cite{lms} (with $W(x) = x^{-\alpha}$ and $\lambda = 1$), the sequence
\[
 a_{n}:= (-1)^{n} \int_{j_{\nu, n}}^{j_{\nu, n + 1}} t^{1/2-\alpha} J_\nu(t) \, dt
\]
is completely monotone: its $\ell$th iterated differences satisfy $(-1)^\ell (\Delta^{\ell} a)_n \ge 0$ for $n = 0, 1, \ldots$ and $\ell = 0, 1, \ldots$\, Furthermore, Theorem~6.1 in~\cite{lms} states that
\begin{equation}
\label{eq:j:est}
 \frac{a_0}{2} < \int_0^\infty t^{1/2-\alpha} J_\nu(t) \, dt < a_0 .
\end{equation}
As we shall see below, this is exactly what is needed for \eqref{eq: mk<1}.

Recall that by Proposition \ref{pro: mk}
\[
  \tm(r) = C \int_r^\infty t^{1/2 - \alpha} J_\nu(t) \, dt.
\]
 with $\nu = \frac{d}{2} + k - 1$, $\alpha = \frac{d + 1}{2}$ and $C = 2^{d/2} \Gamma(\frac{d + k}{2}) / \Gamma(\frac{k}{2})$.
Note that for $k\ge 3$ we have $\nu \ge 2$   and $0 < \alpha \le \nu $.  Hence we may apply the results of~\cite{lms} listed in the previous paragraph. Since for $r \in [j_{\nu, n}, j_{\nu, n+ 1}]$, we have
\[
 (-1)^n \tm'(r) = (-1)^{n + 1} r^{1/2-\alpha}J_{d/2 + k - 1}(r) \le 0 
\]
it follows that $\tm$ is monotone on this interval, and therefore
\begin{equation}
\label{eq:m:r}
 |\tm(r)| \le \max \{|\tm(j_{\nu, n})|, |\tm(j_{\nu, n + 1})|\}
\end{equation}
for $r \in [j_{\nu, n}, j_{\nu, n + 1}]$. It remains to estimate $\tm(j_{\nu, n})$.

We have
\[
 \tm(j_{\nu, n}) = C \sum_{\ell = n}^\infty \int_{j_{\nu, \ell}}^{j_{\nu, \ell + 1}} t^{1/2 - \alpha} J_\nu(t) \, dt = C \sum_{\ell = n}^\infty (-1)^\ell a_{\ell}.
\]
Since $a_n$ is completely monotone, the above sum is the tail of an alternating series. It follows that
\[
 \tm(j_{\nu, 0}) \ge \tm(j_{\nu, 2}) \ge \tm(j_{\nu, 4}) \ge \ldots \ge 0 \ge \ldots \ge \tm(j_{\nu, 5}) \ge \tm(j_{\nu, 3}) \ge \tm(j_{\nu, 1}) .
\]
Furthermore,
\[
 \tm(j_{\nu, 0}) - \tm(j_{\nu, 1}) = C \int_{j_{\nu, 0}}^{j_{\nu, 1}} t^{-\alpha} J_\nu(t) \, dt = C a_0 ,
\]
and by~\eqref{eq:j:est},
\[
 C a_0 \le 2 C \int_0^\infty t^{-\alpha} J_\nu(t) \, dt = 2 m(j_{\nu, 0}) .
\]
Combining this inequality with the previous equation, we find that
\[
 \tm(j_{\nu, 1}) = \tm(j_{\nu, 0}) - C a_0 \ge -\tm(j_{\nu, 0}) .
\]
Finally $\tm(j_{\nu, 0}) = \tm(0) = 1$ by Proposition \ref{pro: mk}, and so
\begin{equation}
\label{eq:m:j}
 1 = \tm(j_{\nu, 0}) \ge \tm(j_{\nu, 2}) \ge \tm(j_{\nu, 4}) \ge \ldots \ge 0 \ge \ldots \ge \tm(j_{\nu, 5}) \ge \tm(j_{\nu, 3}) \ge \tm(j_{\nu, 1}) \ge -1 .
\end{equation}
Inequalities~\eqref{eq:m:r} and~\eqref{eq:m:j} imply the desired estimate \eqref{eq: mk<1} and the proof of Theorem \ref{thm: fbk} is completed.

\subsection{Proof of Corollary \ref{cor: gsir}}
Since 
\[T_{\Omega}^t f(x)=T_{\Omega}^1 ( f(t\cdot))(t^{-1}x),\] it is easy to see that it suffices to consider $t=1.$ In the proof, we abbreviate $T^1=T_{\Omega}^1,$ $T=T_{\Omega}$ and
\[
	K(x) =K_{\Omega}(x)= \frac{\Omega(\frac{x}{\abs{x}})}{\abs{x}^d}, \quad x \in \R^d \setminus \{0\},
\]
and
\[
	K^1(x) = K(x) \ind_{[1,\infty)}(|x|).
\]

We know that $\Omega$ has an expansion in spherical harmonics, that is
\begin{equation} \label{eq:omega_exp}
	\Omega(x) = \sum_{k=1}^\infty \gamma_{k,d} P_k(x), \quad x \in S^{d-1}, \
\end{equation}
where $\gamma_{k,d}$ is defined in \eqref{eq:KP} and $P_k$ is a homogeneous harmonic polynomial of degree $k$. Note that there is no zero-order term in \eqref{eq:omega_exp} because $\int_{\mathbb{S}^d}\Omega(x)\,dx=0.$ Furthermore, the series in \eqref{eq:omega_exp} converges uniformly on $\mathbb{S}^{d-1}$.

Using \eqref{eq:omega_exp} we can express the operator $T^{1}$ as
\begin{equation} \label{eq:Tt}
	\begin{aligned}
		T^{1} f(x) = \int_{\abs{y} > 1} \frac{\Omega(\frac{y}{\abs{y}})}{\abs{y}^d} f(x-y) \, dy &= \sum_{k=1}^\infty \gamma_{k,d}\int_{\abs{y} > 1} \frac{ P_k(\frac{y}{\abs{y}})}{\abs{y}^d} f(x-y) \, dy \\
		&= \sum_{k=1}^\infty T_k^1 f(x).
	\end{aligned}
\end{equation}
Each of the operators $T_k^1$ is a truncated higher-order Riesz transform, namely, $T_k^1=R_{P_k}^1$ with  $R_{P_k}^1$ given by \eqref{eq:R}. As such, according to \eqref{eq:fact}, it can be factorized as $T_k^1 = M_k T_k,$ where $T_k=R_{P_k}$.

Let $m_k$ be the radial profile of the multiplier of the operator $M_k^1$ and let $f_0$ be the radial profile of $f$.  
Then, by Plancherel's theorem, \eqref{eq:Tt} and \eqref{eq:m}, we have
\begin{equation*}
	\norm{T^1 f}_{L^2(\R^d)}^2 = \norm{\widehat{T^1 f}}_{L^2(\R^d)}^2 
	= \int_{\R^d} \left( \sum_{k=1}^\infty m_k(\abs{\xi}) (-i)^kP_k(\tfrac{\xi}{|\xi|}) \widehat{f}_0(\abs{\xi}) \right)^2 \, d\xi.
\end{equation*}
Since the polynomials $P_k$ are orthogonal on $\mathbb{S}^{d-1}$, integrating in polar coordinates, we obtain
\begin{align*}
	\norm{T^1 f}_{L^2(\R^d)}^2&= \int_0^\infty \abs{\widehat{f}_0(r)}^2 \int_{\mathbb{S}^{d-1}} \left( \sum_{k=1}^\infty m_k(r) P_k(x) \right)^2 \, dx\, dr \\
	&= \int_0^\infty \abs{\widehat{f}_0(r)}^2 \int_{\mathbb{S}^{d-1}} \sum_{k=1}^\infty \abs{m_k(r)}^2 \abs{P_k(x)}^2 \, dx \, dr.
    \end{align*}
Finally, using Theorem \ref{thm: fbk}, orthogonality, and Plancherel's theorem we reach
    \begin{align*}
	\norm{T^1 f}_{L^2(\R^d)}^2&\leqslant \int_0^\infty \abs{\widehat{f}_0(r)}^2 \int_{\mathbb{S}^{d-1}} \sum_{k=1}^\infty \abs{P_k(x)}^2 \, dx \, dr\\
    &= \int_{\R^d}\left|\sum_{k=1}^\infty (-i)^kP_k(\xi/|\xi|) \widehat{f}(\xi)\right|^2\,d\xi = \norm{T f}_{L^2(\R^d)}^2.
\end{align*}
This completes the proof of Corollary \ref{cor: gsir}.

%
%

\newcommand{\doi}[1]{\href{https://doi.org/#1}{\texttt{\scriptsize DOI:#1}}}

%
%


\begin{thebibliography}{00}

\bibitem{grafakos}
L. Grafakos,
``Classical Fourier Analysis''
Graduate Texts in Mathematics vol. 249, third edition,
Springer Science+Business Media New York, 2014.


\bibitem{Klein1} 
F. Klein,
\emph{\"Uber die Nullstellen der hypergeometrischen Reihe},
Math. Ann. {\bf 37} (1890), no.~4, 573--590.



\bibitem{concrete-mathematics}
Ronald L. Graham, Donald E. Knuth, and Oren Patashnik,
``Concrete Mathematics: A Foundation for Computer Science'',
second edition, Addison--Wesley, 1994.



\bibitem{kw}
M. Kucharski, B. Wr\'obel,
\emph{A dimension-free estimate on $L^2$ for the maximal Riesz transform in terms of the Riesz transform},
Math. Ann. {\bf 386} (2023), 1017--1039.


\bibitem{kwz}
M. Kucharski, B. Wr\'obel, J. Zienkiewicz,
\emph{Dimension-free $L^p$ estimates for higher order maximal Riesz transforms in terms of the Riesz transforms},
Anal. PDE (2025), to appear, arXiv:2305.09279


\bibitem{LiuMeZhu}
J. Liu, P. Melentijević, J.-F. Zhu,
\emph{$L^p$ norm of truncated Riesz transform and an improved dimension-free $L^p$ estimate for maximal Riesz transform},
Math. Ann. {\bf 389} (2024), 3513--3534.


\bibitem{lms}
Lee Lorch, M.\,E. Muldoon, Peter Szego,
\emph{Higher monotonicity properties of certain Sturm-Liouville functions. III.}
Canad. J. Math. {\bf 22(6)} (1970), 1238--1265.


\bibitem{mov1}
J. Mateu, J. Orobitg,  J. Verdera,
\emph{Estimates for the maximal singular integral in terms of the singular integral: the case of even kernels},
Ann. of Math. {\bf 174} (2011), 1429--1483.

    
\bibitem{mopv}
J. Mateu, J. Orobitg, C. P\'erez, J. Verdera,
\emph{New Estimates for the Maximal Singular Integral},
Int. Math. Res. Not, {\bf (19)} 2010 (2010), 3658--3722.
	

\bibitem{mateu_verdera}
J. Mateu, J. Verdera,
\emph{$L^p$ and weak $L^1$ estimates for the maximal Riesz transform and the maximal Beurling transform},
Math. Res. Lett. {\bf (6) 13} (2006), 957--966.


\bibitem{dlmf}
\emph{NIST Digital Library of Mathematical Functions.} \url{https://dlmf.nist.gov/}, Release 1.2.4 of 2025-03-15. F.\,W.\,J. Olver, A.\,B. Olde Daalhuis, D.\,W. Lozier, B.\,I. Schneider, R.\,F. Boisvert, C.\,W. Clark, B.\,R. Miller, B.\,V. Saunders, H.\,S. Cohl, and M.\,A. McClain, (eds).



\bibitem{prudnikov}
A.\ P.\ Prudnikov, Yu.\ A.\ Brychkov, O.\ I.\ Marichev, 
``Integrals and series. Vol. 2'',
Gordon \& Breach Science Publishers, New York, 1988.



\bibitem{stein}
E. M. Stein,
``Singular integrals and differentiability properties of functions'',
Princeton University Press, Princeton, 1970.




\bibitem{SteinMax} 
E. M. Stein,
\emph{The development of square functions in the work of A. Zygmund},
Bull. Amer. Math. Soc. {\bf 7} (1982), 359--376.


\bibitem{StStr}
E. M. Stein, J. O. Str\"omberg,
\emph{Behavior of maximal functions in $\mathbb{R}^n$ for large $n$}, 
Ark. Mat. {\bf 21} (1983), 259--269.


\bibitem{verdera conf}
J. Verdera, 
\emph{The Maximal Singular Integral: Estimates in Terms of the Singular Integral}, 
In: ``Trends in Harmonic Analysis'', M. A. Picardello (ed.), Springer INdAM Series 3, 2013.

\end{thebibliography}
\end{document}